\newtheorem{Theorem}{Theorem}[section]
\newtheorem{Proposition}{Proposition}[section]
\newtheorem{Lemma}{Lemma}[section]
\newtheorem{Remark}{Remark}[section]
\newtheorem{Corollary}{Corollary}[section]
\numberwithin{equation}{section}
\newcommand{\bTheorem}[1]{
\begin{Theorem} \label{T#1} }
\newcommand{\eT}{\end{Theorem}}
\newcommand{\bProposition}[1]{
\begin{Proposition} \label{P#1}}
\newcommand{\eP}{\end{Proposition}}
\newcommand{\bLemma}[1]{
\begin{Lemma} \label{L#1} }
\newcommand{\eL}{\end{Lemma}}
\newcommand{\bCorollary}[1]{
\begin{Corollary} \label{C#1} }
\newcommand{\eC}{\end{Corollary}}
\newcommand{\bFormula}[1]{
\begin{equation} \label{#1}}
\newcommand{\eF}{\end{equation}}
\newcommand{\Div}       {{\rm div}_x}
\newcommand{\dx}        {\,{\rm d}\x}
\newcommand{\dt}        {\,{\rm d}t}
\newcommand{\dxdt}  {\,\dx \, \dt}
\newcommand{\B}     {{\mathcal B}}
\newcommand{\F}     {{\mathcal F}}
\newcommand{\ep}        {\varepsilon}
\newcommand{\bY} {{\bf Y}}
\newcommand{\bZ} {{\bf Z}}
\newcommand{\bU} {{\bf U}}
\newcommand{\Grad}  {\nabla_x}
\newcommand{\vr}        {\varrho}
\newcommand{\vu}        {\vc{u}}
\newcommand{\vphi}  {\pmb{\varphi}}
\newcommand{\Om}{\Omega}
\newcommand{\vn}{\vc{n}}
\newcommand{\vU}{\vc{U}}
\newcommand{\vw}{\vc{w}}
\newcommand{\vF}{\vc{F}}
\newcommand{\vV}{\vc{V}}
\newcommand{\dv}{{\rm div}}
\renewcommand{\d}{{\rm d} }
\newcommand{\R}{{\mathbb R}}
\newcommand{\de}{\partial}
\newcommand{\vc}[1]     { {\bf #1} }
\newcommand{\tn}[1]     {\mathbb{#1} }
\newcommand{\x}{{\mathbf x}}
\newcommand{\X}{{\mathbf X}}
\newcommand{\y}{{\mathbf y}}
\newcommand{\ZZ}{\widetilde{\bZ}_2}
\newcommand{\ZZj}{\widetilde{\bZ}_1}
\newcommand{\Ot}{\widetilde{\tn{O}}}
\begin{document}

\title{\bf Weak-strong uniqueness for the compressible fluid-rigid body interaction}

\author{ 
Ond\v rej Kreml\textsuperscript{1}, \v S\'arka Ne\v casov\'a\textsuperscript{1{*}}, Tomasz Piasecki\textsuperscript{2} 
}
\date{}

\maketitle

\begin{center}
\textsuperscript{1}Institute of Mathematics, Czech Academy of Sciences \\
 \v Zitn\'a 25, 115 67 Praha 1, Czech Republic.\\
 \textsuperscript{2} Institute of Applied Mathematics and Mechanics, University of Warsaw\\
Banacha 2, 02-097 Warszawa, Poland.\\
 \textsuperscript{*}Corresponding author

\end{center}

\vspace{1cm}

\begin{abstract}
In this work we study the coupled system of partial and ordinary differential equations describing the interaction between a compressible isentropic viscous fluid and a rigid body moving freely inside the fluid. In particular the position and velocity of the rigid body in the fluid are unknown and the motion of the rigid body is driven by the normal stress forces of the fluid acting on the boundary of the body. We prove that the strong solution, which is known to exist under certain smallness assumptions, is unique in the class of weak solutions to the problem. The proof relies on a correct definition of the relative energy, to use this tool we then have to introduce a change of coordinates to transform the strong solution to the domain of the weak solution in order to use it as a test function in the relative energy inequality. Estimating all arising terms we prove that the weak solution has to coincide with the transformed strong solution and finally that the transformation has to be in fact an identity. 

\end{abstract}


\section{ Introduction}

 In this article we investigate the weak-strong uniqueness of the problem of motion of a rigid body inside a viscous
compressible fluid. The
fluid and the body occupy a bounded domain $\Omega\subset \R^{3}$. 
 
Let us denote by $\mathcal{B}(t)\subset
\Omega$ a bounded domain occupied by the rigid body and a domain filled
by the fluid by $\mathcal{F}(t)=\Omega\setminus \overline{\mathcal{B}(t)
}$ at a time moment $t\in \R^{+}.$\ \ Assuming that the initial
position $\mathcal{B}(0)$ of the rigid body is prescribed, for simplicity of
notation we denote $\mathcal{B}_{0}=\mathcal{B}(0)$ and $\mathcal{F}_{0}=
\mathcal{F}(0)$. The interface between the body and the fluid is denoted
by $\partial \mathcal{B}(t)$, the normal vector to the boundary is denoted
by $\vn(t)$ and it is pointing outside $\Omega$ and inside $
\mathcal{B}(t)$. For $T > 0$ we introduce the following notation for the space-time cylinders
\begin{equation*}
\begin{array}{l}
{Q}_{\mathcal{F}}= \bigcup_{t\in (0,T)} \{t\} \times \mathcal{F}(t),

\\
{Q}_{\partial \mathcal{B}}=\bigcup_{t\in (0,T)} \{t\} \times \partial\mathcal{B}(t),

\\
{Q}_{\mathcal{B}}=\bigcup_{t\in (0,T)} \{t\} \times \mathcal{B}(t),

\\
Q_T= (0,T) \times \Omega.
\end{array}
\end{equation*}%
The fluid motion is governed by the equations
\begin{equation}
\left\{
\begin{array}{ccc}
\partial_t \vr_{\mathcal{F}} + \mathrm{div}_x(\,\vr_{\mathcal{F}} \vu_{\mathcal{F}})=0 & \mathrm{in} & Q_{%
\mathcal{F}}, \\
\partial_{t}(\vr_{\mathcal{F}} \vu_{\mathcal{F}}) + \mathrm{div}_x(\vr_{\mathcal{F}}\vu_{\mathcal{F%
}}\otimes \vu_{\mathcal{F}}) + \nabla_x p(\vr_{\mathcal{F}}) - \mathrm{div}_x\,\tn{S}(\nabla_x
\vu_{\mathcal{F}})=0 & \mathrm{in} & {Q}_{%
\mathcal{F}},\\
\vu_{\mathcal{F}} = 0 & \mathrm{on} & (0,T) \times \partial\Omega , \\
\vu_{\mathcal{F}}=\vu_{\mathcal{B}} 
&\mathrm{on} & Q_{\partial
\mathcal{B}}, \\
(\vr_\F \vu_{\mathcal{F}})(0)= (\vr_\F\vu_\F)_{0} & \mathrm{in} & \mathcal{F%
}_{0},\\
\vr_{\mathcal{F}}(0)=\vr_{\mathcal{F}0} & \mathrm{in} & \mathcal{F}_{0},
\end{array}%
\right.  \label{fluidmotion}
\end{equation}%
where $\vu_{\mathcal{F}}$ and $\vr_{\mathcal{F}}$ denote the
velocity and the density of the fluid and $\vu_{\mathcal{B}}$ is
the full velocity of the rigid body. For simplicity of presentation we assume that the external body forces acting on the fluid are zero. We recall that the rate of the strain
tensor of the fluid and its stress tensor are defined by
\begin{equation*}
\tn{D}(\vu_{\mathcal{F}})=\frac{1}{2}(\nabla_x \vu_{%
\mathcal{F}} + (\nabla_x \vu_{\mathcal{F}})^{T})\qquad \text{and}%
\qquad \tn{S}(\nabla_x \vu_{\mathcal{F}})=2\mu \,%
\tn{D}(\vu_{\mathcal{F}}) + \lambda \mathrm{div}_x \vu_\F\tn{I},
\end{equation*}%
with $\mu,\lambda$ being the viscosity coefficients of the fluid such that $\mu > 0$, $\mu+\lambda \geq 0$. We assume that the pressure is given by
\begin{equation}
p(\vr_\F) = a\vr_{\mathcal{F}}^\gamma \label{pres}
\end{equation}
for some $\gamma > 1$ and again for simplicity we set $a = 1$ for the rest of the paper. The pressure potential $P(\vr)$ is defined as
\begin{equation}
P'(\vr_\F)\vr_\F - P(\vr_\F) = p(\vr_F) \label{prespot},
\end{equation}
whence $P(\vr_\F) = \frac{\vr_\F^\gamma}{\gamma-1}$ for the pressure law \eqref{pres}.

We denote the density of the body by $\vr_\B$ and define the mass of the body $m$, its center of mass $\X$ and its inertial tensor $\tn{J}$ by, respectively,
\begin{align}
    m &= \int_{\B(t)} \vr_\B(t,\x) \dx, \label{def:m} \\
    \X(t) &= \frac{1}{m} \int_{\B(t)} \vr_\B(t,\x)\x \dx, \label{def:X} \\
    \tn{J}(t)\vc{a}\cdot\vc{b} &= \int_{\B(t)} \vr_\B(t,\x) \left[\vc{a}\times (\x-\X(t))\right]\cdot\left[\vc{b}\times (\x-\X(t))\right] \dx. \label{def:J}
\end{align}
Note that without loss of generality we can assume that the center of mass of the body is at the origin at time zero, i.e. $\X(0) = 0$. The velocity of the rigid body is given by
\begin{equation}\label{eq:rigidbodyvelocity}
\vu_{\mathcal{B}}(t,\x)=\vV(t) + \vw(t)\times (\x-\X(t))    
\end{equation}
for any $(t,\x) \in Q_\B$.

The fluid equations are coupled to the following balance equations for the
translation velocity $\vV$ and the angular velocity $\vw$ of the body,
\begin{equation}
\left\{
\begin{array}{l}\displaystyle
m\frac{\d}{\dt}\vV(t) = - \int_{\partial\mathcal{B}(t)} (\tn{S}(\nabla_x\vu_\F) - p(\vr_\F)\tn{I})\vc{n} \d S\quad
 \text{ in } (0,T), \\ \displaystyle
\tn{J}(t)\frac{\d}{\dt}\vw(t) = \tn{J}(t)\vw(t)\times\vw(t) - \int_{\partial\mathcal{B}(t)} (\x-\X(t))\times (\tn{S}(\nabla_x\vu_\F) - p(\vr_\F)\tn{I})\vc{n} \d S\quad \text{ in } (0,T),\\
\vV(0)=\vV_{0},\qquad \vw(0)= \vw_{0}. 
\end{array}
\right.  \label{bodymotion}
\end{equation}

\subsection{The weak formulation}
Let us define the weak solution similarly as in the work of Feireisl \cite{F4}. We denote

\begin{equation}
\vr  =
\left| 
\begin{array}{c}
\vr_{\mathcal{F}} \ \ \mbox{in}\ Q_{\F},\\\
\vr_{\mathcal{B}}\ \ \mbox{in}\ Q_{\B},
\end{array}
\right.
\ \ \vu =
\left|
\begin{array}{c}
\vu_{\mathcal{F}}\ \ \mbox{in}\ Q_{\F},\\\
\vu_{\mathcal{B}}\ \ \mbox{in}\ Q_{\B},
\end{array}
\right.
\label{global}
\end{equation}
and then we can write the weak formulation for $(\vr,\vu)$ on the whole space-time domain $Q_T$.
Indeed, since $\vu_\B$ is the velocity describing the movement of the domain $\B(t)$, $\vr_B$ is as smooth as the initial density $\vr_B(0)$ and $\vu_\B$ is smooth in space because it is a rigid motion. Therefore the conservation of mass of the body implies
\begin{equation} \label{eq:CE_body}
\int_0^\tau \int_{\B(t)} (\de_t \vr_\B + \Div(\vr_B\vu_\B))\phi \dx \dt = 0
\end{equation}
for any test function $\phi \in C^\infty([0,T]\times \R^3)$.
As a result, the weak form of the continuity equation (\ref{fluidmotion})$_2$ on the fluid part together with \eqref{eq:CE_body} can be written in the concise form
\begin{equation}
{\displaystyle \int_0^T \int_{\Omega}\vr \partial_t \phi + \vr
\vu\cdot\nabla_x \phi \dx \dt = - \int_\Omega \vr_0\phi(0,\cdot) \dx} \label{continu}
\end{equation}
for any test function $\phi\in C^\infty_c([0,T)\times \Omega)$. Here $\vr_0 = \vr_{\F 0}$ on the fluid part $\F_0$ and $\vr_0 = \vr_\B$ on $\B_0$.

Similarly the momentum equation (\ref{fluidmotion})$_1$ on the fluid part can be combined with the balance equations for the translation and angular velocities of the body to a single equation
\begin{equation}
{\displaystyle \int_0^T \int_{\Omega}(\vr \vu)\cdot \partial_t
\vphi + [\vr \vu\otimes \vu]: \tn{D}(\vphi)
+p\mbox{div}_x\vphi - \tn{S}(\nabla_x\vu): \tn{D}(\vphi) \dx \dt = -\int_\Omega (\vr\vu)_0\cdot\vphi(0,\cdot) \dx}
\label{moment}
\end{equation}
for any test function  $\vphi \in {\mathcal R}(Q_{\B})$, where
$${\mathcal R}(Q_{\B})\equiv \{\vphi\in C^\infty_c([0,T)\times \Omega)\ |\ \tn{D}(\vphi)=0\ \ \mbox{on an open neighbourhood of}\ \overline{Q_{\B}}\}.$$
This involves several operations which will be useful also later, so we demonstrate how we obtain the equation \eqref{moment} in the Appendix.

The motion of the rigid body is described through a family of isometries of $\R^3$ by
$$
\eta [t,s]:  \R^3 \to \R^3,\ \ {\overline \B}(t)= \eta[t,s]({\overline \B}(s))\ \ \mbox{for}\ 0\leq s\leq t\leq T,
$$
or equivalently
$$
\eta[t,s]=\eta[t,0]\left(\eta[s,0]\right)^{-1},
$$
where the mapping $\eta[t,0]$ satisfies
\[
\eta[t,0](\x)\equiv \eta[t](\x) = \X(t) + \tn{O}(t) \x,\ \ \tn{O}(t)\in SO(3).
\]

We say that the velocity $\vu$ is compatible with the family
$\{\B,\eta\},$  if the
function $t\mapsto \eta[t](\x)$ is absolutely continuous
on $[0,T]$ for any $\x\in \R^3$ and if
\begin{equation}
\left( \frac{\partial}{\partial t}\ \eta[t] \right)
\left(\left(\eta[t]\right)^{-1}(\x)\right)=\vu(t,\x)\
 \ \ \mbox{for}\ \x\in {\overline {\B}}(t)\ \mbox{and a.e.}\ t\in (0,T).
\label{etai}
 \end{equation}
In other words if
$$
\vu(t,\x)=\vu_\B(t,\x)\equiv \vV(t)+{\tn{Q}}(t)(\x-\X(t))\ \ \mbox{for}\ \x\in {\overline {\B}}(t)\ \mbox{and a.e.}\ t\in (0,T),
$$
 where
\begin{equation}\label{eq:QO}
    \vV(t)= \frac{\d}{\d t} \X(t),\ \ \tn{Q}(t) = \left(\frac{\d}{\d t}\tn{O}(t)\right)\left(\tn{O}(t)\right)^{-1}
\end{equation}
for a.e. $t \in (0,T)$. Note that the matrix $\tn{Q}(t)$ is skew-symmetric, so the term $\tn{Q}(\x-\X)$ can be written as $\vw \times (\x-\X)$ for a uniquely determined vector $\vw$.

 We define the weak solution of the studied problem \eqref{fluidmotion}-\eqref{bodymotion} in the spirit of \cite{F4}. We say that functions $(\vr,\vu)$ and a family $\{\overline{\B},\eta\}$ represent a weak solution on the set $(0,T)\times\Omega$ if
\begin{itemize}
    \item $\vr \geq 0$, $\vr, P(\vr) \in L^\infty(0,T,L^1(\Omega))$, $\vu \in L^2(0,T,W^{1,2}_0(\Omega))$,
    \item the functions $(\vr,\vu)$ satisfy the continuity equation \eqref{continu} together with its renormalized form
    \begin{equation}
{\displaystyle \partial_t b(\vr)+\Div(b(\vr) \vu)+\left(\vr b'(\vr)- b(\vr)\right)  \Div \vu=0\ \
\mbox{in}\ {\mathcal D}'([0,T)\times \Omega) \quad \forall b \in C^1([0,+\infty))} \label{renor}
\end{equation}
    \item the momentum equation \eqref{moment} is satisfied for any test function $\vphi \in \mathcal{R}(Q_\B)$,
    \item the energy inequality
    \begin{equation}
 \int_{\Omega} \left[ \frac{1}{2}\vr|\vu|^2+P(\vr)\right](\tau,\cdot) \dx
+\int_0^\tau\int_{\Omega} \tn{S}(\nabla_x\vu):\nabla_x\vu \dx \dt \leq E_0,
\label{energy}
\end{equation}
holds for a.e. $\tau \in [0,T]$, where $E_0 = \int_\Omega \frac{1}{2}\frac{|(\vr\vu)_0|^2}{\vr_0} + P(\vr_0) \dx$,
\item the velocity $\vu$ is compatible with $\{\overline{\B},\eta\}$ and the functions $\eta[t]: \R^3 \mapsto \R^3$ are isometries.
\end{itemize}


We also state the following remarks stated already in \cite{F4}.
\smallskip

{\bf Remarks:}
\begin{itemize}
\item In the class of weak solutions the velocity $\vu$  is
not necessarily continuous function. The meaning $\vu(t,\x)=\vu_\B(t,\x)$ for $\x\in \overline{\B}(t)$ is to be understood in the sense that the difference
    $$ \vu(t,.)  - \vu_\B(t,.) \mbox { belongs locally
 to the space } W^{1,2}_0 (  \R^3 \setminus \overline{{\B}}(t)).$$
 
 \item There is no a priori reason that the momentum $(\vr\vu)$ is continuous in time we can only have that function
     \begin{equation}\label{eq:property}
     t \mapsto \int_{\R^3} (\vr \vu)(t,\x)\cdot \vphi(\x) \dx \qquad \text{ is continuous}    
     \end{equation}
 in a certain neighbourhood of a point $t_0$ provided
$\vphi = \vphi(\x) \in {\mathcal D}(\Omega)$ and $\tn{D}(\vphi)=0$
on a neighbourhood of $\overline{\B}(t_0)$. 

\item The alternative condition to the concept of
compatibility of the velocity $\vu$ with the rigid objects was used
in \cite{DEES2}, \cite{SST}, where the authors assumed
$$
\vu\in L^2((0,T);W^{1,2}_0\cap V^s(\Omega)),
$$ 
where the sets
$V^s=V^s(t)$ are defined as
$$
V^s=\{\vu \in W^{1,2}(\Omega)\ | \ \tn{D}(\vu)\vr_{\B}(t)=0\}.
$$

 
\end{itemize}

\subsection{Discussion and main result}
The existence theory of both weak and strong solutions for systems describing the motion of rigid body in a viscous incompressible fluid is nowadays well developed. First results on the existence of weak solutions until first collision go back to the work of  Conca, Starovoitov and Tucsnak \cite{CST}, Desjardins and Esteban \cite{DEES1}, Gunzburger, Lee and Seregin \cite{GLSE}, Hoffman and Starovoitov \cite{HOST}. After that the possibility of collision  in case of weak solution was included, see \cite{SST},  \cite{F3}.  The problem of collisions was investigated in several papers, see \cite{HES, HIL, HT}, where it was shown that under non-slip boundary conditions and under sufficient regularity of the boundary of a domain and rigid body the collisions cannot occur. After that  G\'erard-Varet and Hillairet have shown that to model collisions we need to assume less regularity of domain or different boundary conditions, see e.g. \cite{GH,GH2,GHC}. In case of very high viscosity, under the assumption that rigid bodies are not touching each other or not touching the boundary at initial time it has been shown that the collisions cannot occur in finite time, see \cite{FHN}.
For introduction to the problem of fluid coupled with rigid body see \cite{G2}, \cite{SER3}.
Let us also mention results on strong solutions, see e.g. \cite{T}, \cite{Wa}, \cite{GGH13}. 
 
Much less results are available on the motion of rigid structure in a compressible fluid. The existence of strong solutions in $L^2$ framework for small data up to a collision has been shown in \cite{BG} (see Theorem \ref{t2} below). A different approach to the existence of strong solutions in $L^p$ setting based on $\mathcal{R}$-bounded operators has been applied in \cite{HiMu}, \cite{HMTT} in the barotropic case and \cite{MaityTucsnak18} for the full system.

The existence of weak solution, also up to a collision but without smallness assumptions has been shown in \cite{DEES2}. Generalization of this result allowing collisions has been given in \cite{F4}, see Theorem \ref{t1} below.   

Let us now quote more precisely two existence results, already mentioned before, which are most relevant for our study. The first one concerns  
global existence of weak solutions to the studied problem and has been shown in \cite{F4}. 

\begin{Theorem} \label{t1}
 Suppose that $\Omega\subset \R^3$ and let initial data
$\vr_0$, $(\vr\vu)_0$ be such that
\begin{equation}
\vr_0\geq 0,\ \ \ \vr_0\in L^{\gamma}(\Omega),
\label{roz}
\end{equation}
\begin{equation}
(\vr\vu)_0=0\ \ \mbox{a.e.  on the set}\ \{\x\in\Omega\ |\ \vr_0=0\},\ \ \ \frac{|(\vr\vu)_0|^2}{\vr_0}\in L^1(\Omega).
\label{vz}
\end{equation}
Let $\overline{\B}_0$ be a compact connected set with nonempty interior such that $|\de\overline{\B}_0| = 0$. Suppose moreover that $\gamma>3/2$ and that $\mu > 0$, $\mu+\lambda \geq 0$. Let $T > 0$.

Then the problem
\eqref{fluidmotion}-\eqref{bodymotion} admits a weak solution $(\vr,\vu)$ and $\{
\overline{\B},\eta\}$ on $Q_T$.
\end{Theorem}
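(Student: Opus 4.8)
The proof follows the penalization strategy of \cite{F4} together with the Lions--Feireisl theory for the compressible Navier--Stokes system; here is the plan. The idea is to replace the rigid body by a fluid of very large viscosity and pass to the limit. First I would set up approximate problems on top of the usual multi-level regularization (Faedo--Galerkin, artificial viscosity $\delta\Delta\vr$ in the continuity equation, artificial pressure $\delta\vr^\beta$): for a penalization parameter $n$ let $\B_n(t)=\eta_n[t](\B_0)$ be transported by a spatial mollification of the approximate velocity, so that $t\mapsto\eta_n[t]$ is a family of rigid maps, and consider the compressible Navier--Stokes system posed on the whole $Q_T$ with viscosities $\mu_n(t,\x)=\mu+n\,\mathbf{1}_{\B_n(t)}(\x)$ and $\lambda_n=\lambda+n\,\mathbf{1}_{\B_n(t)}$, the homogeneous Dirichlet condition on $\partial\Omega$, and the regularized continuity equation. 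Solvability at this level is obtained by a Schauder fixed point combined with the existence theory for compressible Navier--Stokes with a given, sufficiently regular, variable viscosity.

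Testing the momentum balance by $\vu_n$ and using the continuity equation then yields the energy inequality uniformly in $n$: $\sqrt{\vr_n}\,\vu_n$ is bounded in $L^\infty(0,T;L^2(\Omega))$, $P(\vr_n)$ in $L^\infty(0,T;L^1(\Omega))$, $\vu_n$ in $L^2(0,T;W^{1,2}_0(\Omega))$, and, crucially,
\[
 n\int_0^T\!\!\int_{\B_n(t)} |\nabla_x\vu_n|^2 \dxdt \le C .
\]
A Bogovskii-type test function improves this to $\vr_n\in L^{\gamma+\theta}((0,T)\times\Omega)$ for some $\theta>0$; the assumption $\gamma>3/2$ is exactly what makes this estimate usable. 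I would then extract weakly convergent subsequences. Since $\{\eta_n[t]\}$ is equi-Lipschitz in $t$, a compactness argument in time provides a limiting family of rigid maps $\eta[t]$ with $\B(t)=\eta[t](\B_0)$, $|\partial\B(t)|=0$, and $\mathbf{1}_{\B_n(t)}\to\mathbf{1}_{\B(t)}$ a.e.; the penalization bound forces $\mathbf{1}_{\B(t)}\tn{D}(\vu)=0$, i.e. $\vu$ is a rigid velocity on $\B(t)$, so $\vu$ is compatible with $\{\B,\eta\}$. Testing the momentum equation against $\vphi\in\mathcal{R}(Q_\B)$, the extra viscous term is supported in a neighbourhood of $\overline{Q_\B}$ on which $\tn{D}(\vphi)=0$, hence it drops out in the limit and one recovers \eqref{moment}, while \eqref{continu} passes to the limit directly. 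Collisions of the body with $\partial\Omega$ need no special treatment, since everything is controlled inside the fixed container $\Omega$.

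The main obstacle, exactly as for the compressible Navier--Stokes system itself, will be the strong convergence $\vr_n\to\vr$ almost everywhere, which is needed to identify the limits of the pressure $p(\vr_n)$ and of $\vr_n|\vu_n|^2$. The plan is to use the effective viscous flux identity together with the boundedness of the oscillation defect measure and the propagation of the renormalized continuity equation (the Lions--Feireisl argument), which also yields \eqref{renor} in the limit. Two features complicate this: the viscosity $\mu_n$ is not constant, so the flux identity must be established on the fluid part, away from $\B(t)$ --- which is enough, because on $\B(t)$ the density satisfies the linear transport equation by a smooth rigid field and its compactness there is automatic; and the solid region moves and may touch $\partial\Omega$. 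Reconciling the density-compactness machinery with the moving, possibly degenerating geometry and with the variable viscosity is the technical heart of the argument. Finally, the energy inequality \eqref{energy} for the limit follows from weak lower semicontinuity of the energy together with the strong convergence of the density.
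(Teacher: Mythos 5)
Theorem \ref{t1} is not proved in this paper at all: it is quoted from Feireisl \cite{F4}, so there is no internal proof to compare your attempt against. Your outline --- replacing the body by a fluid whose viscosity blows up on the transported solid region, the uniform energy and penalization bounds, extraction of a limiting family of rigid motions with $\tn{D}(\vu)=0$ on $\B(t)$, passage to the limit in \eqref{continu}, \eqref{moment} with test functions from $\mathcal{R}(Q_\B)$, and the Lions--Feireisl effective-viscous-flux argument for density compactness carried out on the fluid part --- is essentially the strategy of that reference, so as a plan it is the right one. The one point I would flag is your claim that contact of the body with $\de\Omega$ ``needs no special treatment'': handling possible collisions is exactly what separates \cite{F4} from the earlier up-to-collision result \cite{DEES2}, and it is absorbed there by the specific weak formulation (test functions with $\tn{D}(\vphi)=0$ on a neighbourhood of $\overline{Q_\B}$, the compatibility condition on $\vu$, and the renormalized continuity equation), not because the issue is vacuous; a complete write-up would have to make that part explicit.
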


\begin{Remark}
Note that the original result of \cite{F4} uses the weak formulation of the equations without the initial data terms and with test functions with compact support on the time interval $(0,T)$. The inclusion of initial data terms in \eqref{continu}, \eqref{moment} can be justified using \eqref{eq:property}.
\end{Remark}

Next we recall the result proved in \cite{BG}, stating the existence of 
strong solutions to the studied problem. We define $\vu_{\F 0} := \frac{(\vr_\F\vu_F)_0}{\vr_{\F 0}}$. The strong solution is formulated in terms of $(\vr_\F,\vu_\F)$ for the fluid part and $(\X,\vw)$ such that $\vV = \frac{\d \X}{\dt}$ for the solid part and the position of the rigid body. The initial data for the problem are therefore $(\vr_{\F 0},\vu_{\F 0})$, $\X(0) = 0$, $\frac{\d}{\dt}\X(0) = \vV_0$, $\vw(0) = \vw_0$. 

\begin{Theorem} \label{t2}
Let $\bar \vr$ be the mean value of $\vr_{\F 0}$ in $\F_0$.
Assume that $d(\B_0,\de \Omega) > 0$
and the initial conditions satisfy appropriate compatibility conditions, see \cite[formulas (15), (16)]{BG}.  
Then there exists a constant $\delta>0$ such that if 
\begin{equation}\label{eq:strongass}
\|\vr_{\F 0}-\bar \vr\|_{H^3(\F_0)}+\|\vu_{\F 0}\|_{H^3(\F_0)}+|\vV_0|+|\vw_0|<\delta,    
\end{equation}
then the system \eqref{fluidmotion}-\eqref{bodymotion} admits a unique strong solution 
$(\vr_\F,\vu_\F,\X,\vw)$ defined on $(0,T)$ for all $T$ such that 
\begin{equation}\label{eq:dist_cond}
d(\B(t),\de \Omega)\geq \kappa>0 \quad \forall t\in[0,T] \quad \textrm{for some}\; d(\B_0,\de \Omega)>\kappa>0.  
\end{equation}
Moreover, this solution belongs to the space 
\begin{align*}
&\vr_\F \in L^2(0,T;H^3)\cap C(0,T;H^3)\cap H^1(0,T;H^2)\cap C^1(0,T;H^2)\cap H^2(0,T;L^2),\\
&\vu_\F \in L^2(0,T;H^4)\cap C(0,T;H^3)\cap C^1(0,T;H^1)\cap H^2(0,T;L^2),\\
&\vV = \frac{\d\X}{\dt},\vw \in H^2(0,T) \cap C^1([0,T]).
\end{align*}
\end{Theorem}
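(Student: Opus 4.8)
The plan is to straighten the moving fluid domain by an explicit change of variables, reducing the free--boundary system \eqref{fluidmotion}--\eqref{bodymotion} to a quasilinear problem on the fixed reference domain $\F_0$, and then to solve that problem by a contraction argument that exploits the smallness assumption \eqref{eq:strongass}. First I would construct a family of diffeomorphisms $\Phi_t:\F_0\to\F(t)$ that agrees with the rigid motion $\eta[t]$ in a fixed neighbourhood of $\partial\B_0$ and with the identity near $\partial\Omega$; since $d(\B_0,\partial\Omega)>0$ and, along the solution, \eqref{eq:dist_cond} holds, $\Phi_t$ is an $H^3$-in-space diffeomorphism whose time regularity is controlled by that of $(\X,\vw)$. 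Because $\X(0)=0$ and $\vV_0,\vw_0$ are small, $\Phi_t$ stays close to the identity on a fixed interval, so the variable coefficients produced by the pull-back are a small perturbation of the constant-coefficient compressible Navier--Stokes operator. Under this transformation the fluid unknowns $(\vr_\F,\vu_\F)$ solve a system of the same structure on $\F_0$, coupled through the traces on $\partial\B_0$ to the ODE system \eqref{bodymotion} for $(\X,\vV,\vw)$.

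The analytic core is a maximal-regularity estimate for the \emph{linearised} coupled problem on $\F_0$: the constant-coefficient linearisation of \eqref{fluidmotion} with the no-slip and matching boundary conditions, coupled to the linearised Newton equations. One needs a closed estimate in exactly the $L^2$-based spaces of the statement --- $\vr_\F\in C(0,T;H^3)\cap H^1(0,T;H^2)\cap H^2(0,T;L^2)$, $\vu_\F\in L^2(0,T;H^4)\cap C(0,T;H^3)\cap C^1(0,T;H^1)$, $(\vV,\vw)\in H^2(0,T)$. I would obtain it by energy methods adapted to the hyperbolic--parabolic structure: differentiate in space and time, test against the appropriate quantities, use the ellipticity of $\mathrm{div}_x\,\tn{S}$ together with elliptic regularity to recover the two extra spatial derivatives on $\vu_\F$, and use the transport character of the continuity equation to propagate the $H^3$ regularity of the density while gaining time derivatives from the equation itself. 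The compatibility conditions \cite[(15),(16)]{BG} enter here: they are precisely what guarantees that the prescribed initial data are attained in these high-order spaces, so that the forcing terms generated later by the nonlinearities land in the correct spaces.

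With the linear theory in hand, I would close a fixed-point argument. Given $(\bar\vr_\F,\bar\vu_\F,\bar\X,\bar\vw)$ in a small ball of the solution space, build the associated diffeomorphism $\bar\Phi_t$, freeze the coefficients, solve the linear problem, and show the resulting solution map is a contraction provided $\delta$ (equivalently, the relevant $T$-dependent norm) is small. Every nonlinear contribution --- the convective term $\vr_\F\vu_\F\otimes\vu_\F$, the pressure nonlinearity $\vr_\F^\gamma$ written as a perturbation of $\bar\vr^\gamma$, the coefficient commutators coming from $\bar\Phi_t$ and its time derivatives, the boundary integrals in \eqref{bodymotion}, and the gyroscopic term $\tn{J}\vw\times\vw$ --- is either quadratic in the unknowns or carries a small factor, and is estimated via the algebra and product rules for $H^s$ in three space dimensions. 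Uniqueness in the class of Theorem \ref{t2} is then immediate from the contraction property. Finally a continuation argument extends the local solution to all of $[0,T]$: as long as \eqref{eq:dist_cond} holds the change of variables is non-degenerate, and the a priori bound (which keeps the solution small) lets one restart the construction, so the solution exists on the whole interval on which the body stays away from $\partial\Omega$.

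The step I expect to be the main obstacle is the maximal-regularity analysis of the linearised coupled system together with the control of $\Phi_t$. The highest-order estimates require carefully tracking how the rigid-body traces feed into the fluid estimates and back, so that the coupling does not destroy the closure; and the change of variables must be shown to possess $H^2$ regularity in time \emph{without} losing spatial regularity, which forces the use of the specific, geometrically chosen extension $\Phi_t$ rather than an arbitrary one. This interplay --- high-order energy estimates for a hyperbolic--parabolic system on a domain moving by a rigid motion --- is the technical heart of \cite{BG}.
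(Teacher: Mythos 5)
This statement is not proved in the paper at all: Theorem \ref{t2} is quoted verbatim as an external result, with the proof attributed to Boulakia and Guerrero \cite{BG}, so there is no in-paper argument to compare your proposal against. That said, your outline does follow the same general strategy as the cited work --- transform to the fixed reference domain via a diffeomorphism that is rigid near the body and the identity near $\partial\Omega$, derive high-order $L^2$-based estimates for the linearised coupled fluid--body system, close a fixed-point argument using the smallness in \eqref{eq:strongass}, and continue the solution as long as \eqref{eq:dist_cond} keeps the change of variables non-degenerate. Be aware, however, that what you have written is a plan rather than a proof: the entire technical content (the maximal-regularity/energy estimates for the hyperbolic--parabolic coupled system in the spaces listed in the statement, the verification that the compatibility conditions of \cite{BG} yield attainment of the initial data at this regularity, and the control of the time regularity of the diffeomorphism without loss of spatial derivatives) is named but not carried out, and you yourself identify it as the main obstacle. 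Within the context of the present paper the honest course is simply to cite \cite{BG}, as the authors do; reproducing the proof would require executing precisely the estimates you have only sketched.
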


The advantage of weak solutions is that we usually know they exist without any smallness assumptions. However, their drawback is that they are in general not unique. Therefore two directions are developed to ensure uniqueness of weak solutions. One is to look for some additional assumptions that will guarantee uniqueness. In the case of incompressible Navier-Stokes system it leads to well known Prodi-Serrin-type conditions. Much less is known for compressible flows, we can refer to uniqueness result by Hoff \cite{Hoff} in the nonstationary case and \cite{MP} for stationary inflow problem, both assuming that the pressure is a constant function of the density. Another approach, which is used also in our study, is to show the so called weak-strong uniqueness property, meaning that a weak solution coincides with a strong one as long as the latter exists. The weak-strong uniqueness is based on the  concept of relative entropy (or energy) inequality introduced  already by Dafermos \cite{D} and in case of compressible fluid formulated by Germain \cite{G}. Weak-strong uniqueness for a compressible barotropic flow without additional structural assumptions has been shown in \cite{FeJiNo}. The result has been generalized to complete system with variable temperature in \cite{FN} assuming specific dependence of viscosity and heat conductivity in the temperature. We also refer to \cite{Dobo}, \cite{KrNePi} for problems on moving domains.
Concerning fluid-structure interaction problems, a weak-strong uniqueness result 
for a motion on rigid body in an incompressible fluid has been shown recently in 2D case, see \cite{bravin2018weak}, \cite{GlassSueur} and in 3D case see,  \cite{CNM}, \cite{MNR}. In a case of rigid body with a cavity filled by  incompressible fluid the weak-strong uniqueness was shown in \cite{Disser2016}. For an analogous result for a cavity filled by compressible fluid see \cite{GMN}.

Our goal is to show the weak-strong uniqueness property for system 
\eqref{fluidmotion}-\eqref{bodymotion}. More precisely, our main result reads 
\begin{Theorem} \label{t3}
Let $\bar \vr$ be the mean value of $\vr_{\F 0}$ in $\F_0$.
Assume that $d(\B_0,\de \Omega) > 0$
and the initial conditions $(\vr_{\F 0},\vu_{\F 0},\vV_0,\vw_0)$ satisfy appropriate compatibility conditions, see \cite[formulas (15), (16)]{BG}. Let moreover \eqref{eq:strongass} be satisfied, $\gamma > 3/2$ and let $(\vr_{2 \F},\vu_{2 \F},\X_2,\vw_2)$ be the strong solution to \eqref{fluidmotion}-\eqref{bodymotion} on $(0,T)$ given by Theorem \ref{t2} satisfying \eqref{eq:dist_cond}.

Let $(\vr_1,\vu_1)$ and $\{\overline{\B}_1,\eta_1\}$ be a weak solution to \eqref{fluidmotion}-\eqref{bodymotion} on $(0,T)$ emanating from the same initial data given by Theorem \ref{t1}. Then 
\begin{equation*}
\eta_1[t](x) = \X_2(t) + \tn{O}_2(t)\x,
\end{equation*}
where 
\begin{equation*}
    \frac{\d \tn{O}_2}{\dt}\tn{O}_2^{-1}\y = \vw_2 \times \y, \quad \text{ for all } \y \in \R^d, \qquad \tn{O}_2(0) = \tn{I},
\end{equation*}
for all $t \in (0,T)$, therefore $\B_1(t) = \B_2(t)$, and 
\begin{equation*}
    (\vr_{1 \F},\vu_{1 \F}) = (\vr_{2 \F},\vu_{2 \F})
\end{equation*}
for all $(t,\x) \in Q_{\F 1} = Q_{\F 2}$.
\end{Theorem}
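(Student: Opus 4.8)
The plan is to run a relative energy argument adapted to the fluid--rigid body geometry, in the spirit of \cite{FeJiNo} but with the extra difficulty that the weak and the strong solution are posed on the time-dependent domains $\F_1(t)$ and $\F_2(t)$ associated with \emph{a priori} different body positions. The first step is to derive a relative energy inequality for the weak solution. For a test pair $(r,\vU)$ with $r$ smooth and bounded away from $0$ and $\infty$, and $\vU$ smooth, vanishing on $\de\Omega$ and coinciding with a rigid velocity field on a neighbourhood of $\overline{\B}_1(t)$ (so that $\vU \in \mathcal{R}(Q_{\B_1})$), set
\[
\mathcal{E}\big((\vr_1,\vu_1)\,\big|\,(r,\vU)\big)(\tau)=\int_\Omega\Big[\tfrac12\vr_1|\vu_1-\vU|^2+P(\vr_1)-P'(r)(\vr_1-r)-P(r)\Big](\tau)\dx.
\]
Using the energy inequality \eqref{energy}, the continuity equation \eqref{continu}, its renormalized form \eqref{renor} with $b=P$, and the momentum equation \eqref{moment} tested by $\vU$, one obtains
\[
\mathcal{E}(\tau)+\int_0^\tau\!\!\int_\Omega\tn{S}(\nabla_x(\vu_1-\vU)):\nabla_x(\vu_1-\vU)\dxdt\le\mathcal{E}(0)+\int_0^\tau\mathcal{R}(\vr_1,\vu_1,r,\vU)\dt,
\]
where $\mathcal{R}$ is a remainder that vanishes identically when $(r,\vU)$ is a classical solution of \eqref{fluidmotion}--\eqref{bodymotion}. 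The kinetic energy of the body is automatically accounted for because $\vu_1$ and $\vU$ are rigid on $\overline{\B}_1(t)$.

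The second, geometric, step is to turn the strong solution into an admissible test pair. For each $t$ I would construct a diffeomorphism $\Phi_t:\Omega\to\Omega$ mapping $\overline{\B}_2(t)$ onto $\overline{\B}_1(t)$, equal to the rigid motion $\eta_1[t]\circ(\eta_2[t])^{-1}$ on a neighbourhood of $\overline{\B}_2(t)$ and equal to the identity near $\de\Omega$; this is possible as long as the two body configurations stay close, which will be guaranteed \emph{a posteriori}. Pushing $(\vr_{2\F},\vu_{2\F})$ forward through $\Phi_t$ --- with a Piola-type transformation of the density so that mass is preserved and a correction of the velocity by $\de_t\Phi_t$ --- yields a pair $(\tilde\vr_2,\tilde\vu_2)$ on $\Omega$ with $\tilde\vu_2$ rigid in a neighbourhood of $\overline{\B}_1(t)$; together with the body density of the first solution this gives a legitimate $(r,\vU)$. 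Since $\Phi_t\neq\mathrm{id}$, this test pair does not solve the equations exactly, so $\mathcal{R}$ now contains, on top of terms controlled by $\mathcal{E}$ and by the dissipation, transformation errors measured by a configuration distance $d(t)$ built from $\X_1-\X_2$ and $\tn{O}_1\tn{O}_2^{-1}-\tn{I}$.

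The third step is the estimate of $\mathcal{R}$ and closing a Gronwall loop. Splitting the density into the regions $\{\vr_1\sim\bar\vr\}$ and its complement (the latter handled using $\gamma>3/2$ as in \cite{FeJiNo}), the pressure, convective and viscous residuals are bounded by $C\mathcal{E}(t)$ plus a small fraction of the dissipation, via Korn's and Sobolev's inequalities and the uniform bounds on the strong solution from Theorem \ref{t2}; the transformation errors are of order $d(t)$. In parallel, subtracting the body balances \eqref{bodymotion} for the two solutions and using the compatibility identities $\vu_1=\vu_{\B_1}$ and the analogous ones for $\tilde\vu_2$ on $\overline{\B}_1(t)$ gives $\frac{\d}{\dt}d(t)\le C\big(d(t)+\mathcal{E}(t)+(\text{dissipation})\big)$. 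Adding, $y(t):=\mathcal{E}(t)+d(t)$ satisfies $y(\tau)\le C\int_0^\tau y\dt$ with $y(0)=0$ (identical initial data), so $y\equiv0$. I expect the main obstacle to be precisely the construction of $\Phi_t$ and the bookkeeping of its errors: $\Phi_t$ must be simultaneously exactly rigid near the body (so that $\tilde\vu_2$ is an admissible test function), close enough to the identity that $(\tilde\vr_2,\tilde\vu_2)$ inherits the regularity of the strong solution with uniform constants, and such that \emph{every} term it generates is genuinely $O(d(t))$ or $O(\mathcal{E}(t))$ rather than merely small --- which forces the derivatives of $\Phi_t$ to be absorbed by the $H^3$ and $H^4$ bounds of Theorem \ref{t2}.

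Finally, $\mathcal{E}\equiv0$ gives $\vr_1=\tilde\vr_2$ and $\vu_1=\tilde\vu_2$ a.e., while $d\equiv0$ gives $\X_1=\X_2$ and $\tn{O}_1=\tn{O}_2$, hence $\Phi_t=\mathrm{id}$ and $\overline{\B}_1(t)=\overline{\B}_2(t)$ for all $t$. Therefore $\eta_1[t](\x)=\X_2(t)+\tn{O}_2(t)\x$ with $\tn{O}_2$ generated by $\vw_2$ as in the statement, so $\B_1(t)=\B_2(t)$; and since on $Q_{\F1}=Q_{\F2}$ the transformation is the identity, $(\vr_{1\F},\vu_{1\F})=(\vr_{2\F},\vu_{2\F})$.
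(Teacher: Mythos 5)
Your overall strategy (a relative energy inequality for the weak solution combined with a change of variables that is rigid near the body, equal to the identity near $\de\Omega$, and transports the strong solution to the weak solution's geometry) is exactly the one used in the paper, but the specific way you transport the velocity opens a gap that your Gronwall loop cannot close. With the ALE-type correction of the velocity by $\de_t\Phi_t$, a direct computation with $\Phi_t=\eta_1[t]\circ(\eta_2[t])^{-1}$ near the body shows that the transported velocity coincides on a neighbourhood of $\overline{\B}_1(t)$ with the weak solution's \emph{own} rigid field $\vV_1+\vw_1\times(\x-\X_1)$ (the $\vw_2$-terms cancel). Hence $\mathcal{E}$ carries no information about $\vV_1-\vV_2$ or $\vw_1-\vw_2$, while the fluid residuals generated by $\Phi_t$ are of size $|\vV_1-\Ot^T\vV_2|+|\vw_1-\Ot^T\vw_2|$ (this is what $\de_t\Phi_t$ costs), so all control of your configuration distance $d(t)$ and of these residuals must come from the claimed inequality $\frac{\d}{\dt}d\le C(d+\mathcal{E}+\mathrm{dissipation})$, obtained by ``subtracting the body balances \eqref{bodymotion} for the two solutions''. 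That step is not available: the weak solution does not satisfy \eqref{bodymotion} in any pointwise sense; $\vV_1,\vw_1$ are merely square integrable in time, and $\tn{S}(\nabla_x\vu_1)-p(\vr_1)\tn{I}$ has no trace on $\de\B_1(t)$, so there is no ODE for $(\vV_1,\vw_1)$ to subtract, and nothing else in your scheme bounds $|\vV_1-\vV_2|+|\vw_1-\vw_2|$.

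The paper's way out is precisely the choice of transform you discarded: the velocity is multiplied by the Jacobian matrix only, $\bU^s=\tn{J}_{\widetilde{\bZ}_1}\,\vu_2\circ\widetilde{\bZ}_2$ as in \eqref{newsol2}, with no $\de_t\Phi_t$ correction and no determinant factor on the density. Then on $\B_1(t)$ one has $\bU^s=\vV^s+\vw^s\times(\x-\X_1)$ with $\vV^s=\Ot^T\vV_2$, $\vw^s=\Ot^T\vw_2$ (see \eqref{eq:strongbody}), so the body part of the relative energy equals $\frac m2|\vV_1-\vV^s|^2+\frac12\tn{J}_1(\vw_1-\vw^s)\cdot(\vw_1-\vw^s)$, and Lemma \ref{l:31} bounds the geometric errors by $\|\vV_1-\vV^s\|_{L^2(0,t)}+\|\vw_1-\vw^s\|_{L^2(0,t)}$ and $\de_t\widetilde{\bZ}_2$ by the pointwise differences; every residual is then $\le C\mathcal{E}$ plus a small fraction of the dissipation, the rigid-body ODEs are used only for the transformed \emph{strong} solution \eqref{eq:rigidtransformed1}--\eqref{eq:rigidtransformed2}, and a single Gronwall argument closes without any auxiliary $d(t)$. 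Two further points you would still need even with the correct transform: the transported velocity is rigid only \emph{on} $\B_1(t)$, not in a neighbourhood (the pushforward of $\vu_{2\F}$ near $\de\B_2(t)$ is not rigid), so it must be regularized by a cutoff $\xi^\ep$ before it is an admissible test function and one passes $\ep\to0$; and $\mathcal{E}\equiv0$ only gives $\vV_1=\Ot^T\vV_2$, $\vw_1=\Ot^T\vw_2$, so a final ODE-uniqueness argument for $\tn{O}_1-\tn{O}_2$ is needed to conclude $\tn{O}_1=\tn{O}_2$, $\B_1=\B_2$ and that the transformation is the identity.
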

\begin{Remark} 
The smallness assumption \eqref{eq:strongass} is only required for the existence of strong solution, we don't use it in the proof of Theorem \ref{t3}.
\end{Remark}

The relation between the initial data in Theorem \ref{t2} and the initial data in Theorem \ref{t1} is obvious, namely we consider the initial data to be the same if
\begin{equation}
\vr_0  =
\left| 
\begin{array}{c}
\vr_{\mathcal{F} 0} \ \ \mbox{in}\ \Omega \setminus \B_0,\\\
\vr_{\mathcal{B}}(0,\cdot)\ \ \mbox{in}\ \B_0
\end{array}
\right.
\ \ (\vr\vu)_0 =
\left|
\begin{array}{c}
\vr_{\F 0}\vu_{\mathcal{F} 0}\ \ \mbox{in}\ \Omega \setminus \B_0,\\\
\vr_{\mathcal{B}}(0,\x)\left(\vV_0 + \vw_0 \times \x \right)\ \ \mbox{in}\ \B_0.
\end{array}
\right.
\label{eq:ICrelation}
\end{equation}
As already mentioned, an analogous result for a motion of a rigid body in an incompressible fluid has been shown in \cite{CNM}.
The key difficulty in the proof is that in order to compare strong and weak solution we need to have them defined on the same domain. 
However, weak and strong solutions are defined on different time-dependent domains where the motion of the body is given by corresponding velocity. Therefore one has to introduce appropriate change of coordinates to transform the strong solution to the domain of the weak solution. 
This is also the main difference compared to the problem of a body with a cavity studied in \cite{GMN}, where we have the same domain for weak and strong solution. 
The modified strong solution satisfies system \eqref{fluidmotion} with many additional terms coming from the change of coordinates which must be carefully treated.   
\begin{Remark}
The existence of weak solutions formulated in Theorem \ref{t1} holds also for many bodies regardless of possible collisions. However, as the existence of strong solutions (Theorem \ref{t2}) is known only for a single rigid body, for sake of simplicity we restrict our main result formulated in Theorem \ref{t3} to the case of a single body.
\end{Remark}

\section{Relative energy inequality}
\label{rei}

We will deduce the relative energy inequality in the spirit of \cite{FeJiNo} with one adjustment, namely we want to measure the difference of the densities only on the fluid part. Therefore we introduce the following notation. For a weak solution $(\vr,\vu)$ and a pair of test functions $(r,\vU)$ where $r$ is defined on $Q_\F$ and $\vU$ is defined on $Q_T$ we define the relative energy $\mathcal {E}\Big([\vr,\vu]|[r,\vU]\Big)$  as
\begin{align}\label{eq:RE}
\mathcal {E}\Big([\vr,\vu]|[r,\vU]\Big)(\tau)& = \int_{\Omega} \frac 12 \vr\left|\vu-\vU\right|^2 (\tau,\cdot) \dx \\ \nonumber
& + \int_{\F(\tau)}\left( P(\vr) - P'(r)(\vr-r)-P(r) \right)(\tau,\cdot) \dx.
\end{align}
We prove the following 
\begin{Proposition}\label{p:REI}

Let $(\vr,\vu)$ and $\{\overline{\B},\eta\}$ be a weak solution to the problem \eqref{fluidmotion}-\eqref{bodymotion}.  Then $(\vr,\vu)$ satisfies the following relative energy inequality
\begin{align}\label{eq:REI}
&\mathcal {E}\Big([\vr,\vu]|[r,\vU]\Big)(\tau) + \int_0^\tau\int_{\F(t)} \left(\tn{S}(\Grad \vu) - \tn{S}(\Grad \vU)\right):\left(\Grad \vu - \Grad \vU\right)\dxdt \\ \nonumber \leq\, &\mathcal {E}\Big([\vr_0,\vu_0]|[r(0,\cdot),\vU(0,\cdot)]\Big) + \int_0^\tau \mathcal{R}(\vr,\vu,r,\vU)(t) \dt
\end{align}
for a.a. $\tau \in (0,T)$ and any pair of test functions $(r,\vU)$ such that $r \in C^{\infty}(\overline{Q_\F})$, $r > 0$, $\vU \in C^{\infty}(\overline{Q_T})$, $\vU = 0$ on $\partial \Omega$ and 
 \begin{equation} \label{dzero}
 \tn{D}(\vU)=0 \; \textrm{in a neighbourhood of} \; \B(t).    
 \end{equation}
The remainder term $\mathcal{R}$ is given by
\begin{align}\label{eq:R}
&\mathcal{R}(\vr,\vu,r,\vU)(t) = \int_{\Om} \vr(\partial_t\vU + \vu\cdot\Grad\vU)\cdot(\vU-\vu) \dx \\ \nonumber
& \quad + \int_{\F(t)} \tn{S}(\Grad\vU):(\Grad\vU-\Grad\vu) + \Div\vU (p(r) - p(\vr)) + (r-\vr)\partial_t P'(r) \dx \\ \nonumber
&\quad + \int_{\F(t)} (r\vU - \vr\vu)\cdot\Grad P'(r)\dx + \int_{\de \B(t)} p(r) (\vu-\vU)\cdot \vn\, \d S.
\end{align}
\end{Proposition}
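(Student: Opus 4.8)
The plan is to derive \eqref{eq:REI} by the relative‑energy computation of Feireisl--Jin--Novotn\'y \cite{FeJiNo}, adapted to the fact that in our setting the pressure–potential part of $\mathcal E$ is integrated only over the moving fluid domain $\F(t)$, which is what forces the appearance of the surface term on $\partial\B(t)$. First I would rewrite \eqref{energy}: on $Q_\B$ we have $\vr=\vr_\B$, $\vu=\vu_\B$, and since $\vu_\B$ is rigid, $\Div\vu_\B=0$ and $\tn{S}(\Grad\vu)=2\mu\tn{D}(\vu_\B)+\lambda(\Div\vu_\B)\tn{I}=0$ a.e.\ on $Q_\B$; moreover, by \eqref{renor} with $b=P$ together with $\Div\vu_\B=0$, the quantity $P(\vr)$ is transported on $Q_\B$, so $\int_{\B(t)}P(\vr)(t)\dx=\int_{\B_0}P(\vr_0)\dx$ is independent of $t$. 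Subtracting this constant from both sides, \eqref{energy} becomes
\begin{equation*}
\int_\Omega \tfrac12\vr|\vu|^2(\tau)\dx+\int_{\F(\tau)}P(\vr)(\tau)\dx+\int_0^\tau\int_{\F(t)}\tn{S}(\Grad\vu):\Grad\vu\dxdt\le \int_\Omega \frac{|(\vr\vu)_0|^2}{2\vr_0}\dx+\int_{\F_0}P(\vr_0)\dx.
\end{equation*}

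Next I would test \eqref{moment} with $\vphi=\vU$ — admissible because $\tn{D}(\vU)=0$ near $\B(t)$ and $\vU=0$ on $\partial\Omega$, after replacing $\vU$ by $\vU$ times a temporal cut‑off approximating $1_{[0,\tau)}$ (and, if needed, a spatial one), the endpoint term being controlled through \eqref{eq:property} — and test \eqref{continu} with $\phi=\tfrac12|\vU|^2$, which gives, for a.a.\ $\tau$,
\begin{align*}
\Big[\int_\Omega \vr\vu\cdot\vU\dx\Big]_0^\tau&=\int_0^\tau\int_\Omega \Big(\vr\vu\cdot\partial_t\vU+\vr(\vu\cdot\Grad\vU)\cdot\vu+p(\vr)\Div\vU-\tn{S}(\Grad\vu):\tn{D}(\vU)\Big)\dxdt,\\
\Big[\int_\Omega \tfrac12\vr|\vU|^2\dx\Big]_0^\tau&=\int_0^\tau\int_\Omega \Big(\vr\vU\cdot\partial_t\vU+\vr(\vu\cdot\Grad\vU)\cdot\vU\Big)\dxdt,
\end{align*}
where I used $\vr\vu\otimes\vu:\tn{D}(\vU)=\vr(\vu\cdot\Grad\vU)\cdot\vu$. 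Adding the rewritten energy inequality, subtracting the first identity and adding the second reconstructs the kinetic part $\int_\Omega\tfrac12\vr|\vu-\vU|^2$ of $\mathcal E(\tau)$; the $\partial_t\vU$ contributions collapse to $\vr(\vU-\vu)\cdot\partial_t\vU$ and the convective ones to $\vr(\vu\cdot\Grad\vU)\cdot(\vU-\vu)$, giving together the first line of $\mathcal R$. Adding and subtracting $\tn{S}(\Grad\vU):(\Grad\vu-\Grad\vU)$ — and noting that $\tn{S}(\Grad\vu)$, $\tn{S}(\Grad\vU)$ and $\Div\vU$ all vanish in a neighbourhood of $\B(t)$, so these integrals may be taken over $\F(t)$ — produces the dissipation term on the left of \eqref{eq:REI} and the term $\int_{\F(t)}\tn{S}(\Grad\vU):(\Grad\vU-\Grad\vu)\dx$ of $\mathcal R$. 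At this stage the only pieces of $\mathcal E$ not yet accounted for are the pressure–potential ones, and one pressure term $-\int_{\F(t)}p(\vr)\Div\vU\dx$ is still unmatched.

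It then remains to evolve the pressure–potential part $\int_{\F(\tau)}\big(P(\vr)-P'(r)(\vr-r)-P(r)\big)(\tau)\dx$; the $P(\vr)$ piece is supplied by the rewritten energy inequality, and for the rest I use $P'(r)r-P(r)=p(r)$ to reduce to $\int_{\F(\tau)}\big(p(r)-P'(r)\vr\big)(\tau)\dx$. Differentiating by the Reynolds transport theorem on $\F(t)$, whose boundary $\partial\B(t)$ moves with velocity $\vu_\B=\vu$, and — for the term involving $\vr$ — using $\partial_t\vr=-\Div(\vr\vu)$ on $Q_\F$ (after a mollification) and an integration by parts whose boundary contribution cancels the transport one, I get
\begin{equation*}
\frac{\d}{\dt}\int_{\F(t)}P'(r)\vr\dx=\int_{\F(t)}\Big(\vr\,\partial_t P'(r)+\vr\vu\cdot\Grad P'(r)\Big)\dx.
\end{equation*}
For the other term, the identities $\partial_t p(r)=r\,\partial_t P'(r)$, $\Grad p(r)=r\,\Grad P'(r)$, together with $\vU\cdot\Grad p(r)=\Div(p(r)\vU)-p(r)\Div\vU$ and $\vU=0$ on $\partial\Omega$, give
\begin{equation*}
\frac{\d}{\dt}\int_{\F(t)}p(r)\dx=\int_{\F(t)}\Big(r\,\partial_t P'(r)+r\,\vU\cdot\Grad P'(r)+p(r)\Div\vU\Big)\dx+\int_{\partial\B(t)}p(r)(\vu-\vU)\cdot\vn\,\d S.
\end{equation*}
Subtracting the first from the second yields exactly $\int_{\F(t)}\big[(r-\vr)\partial_t P'(r)+(r\vU-\vr\vu)\cdot\Grad P'(r)+p(r)\Div\vU\big]\dx+\int_{\partial\B(t)}p(r)(\vu-\vU)\cdot\vn\,\d S$, and merging the leftover $-\int_{\F(t)}p(\vr)\Div\vU\dx$ turns $p(r)\Div\vU$ into $\Div\vU(p(r)-p(\vr))$; thus all time‑integrated terms equal $\int_0^\tau\mathcal R\dt$, while the initial‑time terms assemble into $\mathcal E\big([\vr_0,\vu_0]|[r(0,\cdot),\vU(0,\cdot)]\big)$, which is \eqref{eq:REI}.

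The two steps where rigor is nontrivial are, first, the admissibility of $\vU$ and $\tfrac12|\vU|^2$ as test functions: since neither is compactly supported in time (and $\vU$ need not be compactly supported in $\Omega$) one needs the cut‑off/density argument above together with the weak time‑continuity \eqref{eq:property}, and one must verify that all the products are integrable, which is precisely where the hypothesis $\gamma>3/2$ enters (it yields $\vr|\vu|^2,\ \vr\vu\otimes\vu\in L^1$ via $\vu\in L^2(0,T;L^6)$). Second, and this is the main obstacle, one must justify the Reynolds‑transport manipulations on the moving domain $\F(t)$ while $\vr,\vu$ have only weak‑solution regularity: this is done by mollifying the continuity equation on $Q_\F$, splitting the resulting identity into fluid and body parts (the body continuity equation \eqref{eq:CE_body} absorbing the contributions on $\B(t)$, using $\Div\vu_\B=0$), and carefully tracking the traces on $\partial\B(t)$ — the decisive point being that the transport boundary terms cancel for the $P'(r)\vr$ integral but leave the genuinely non‑vanishing surface term $\int_{\partial\B(t)}p(r)(\vu-\vU)\cdot\vn\,\d S$, because $\vU$ need not coincide with $\vu_\B$ on $\partial\B(t)$.
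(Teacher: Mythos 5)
Your argument is correct and follows essentially the same route as the paper: testing \eqref{moment} with $\vU$ (via a temporal cut-off) and \eqref{continu} with $\tfrac12|\vU|^2$, adding the energy inequality, discarding the constant body contribution of $P(\vr)$, and then evolving $\int_{\F(t)}\bigl(p(r)-P'(r)\vr\bigr)\dx$ on the moving fluid domain, which produces the same boundary term $\int_{\partial\B(t)}p(r)(\vu-\vU)\cdot\vn\,\d S$. Your Reynolds-transport formulation of the pressure-potential step is just the differentiated form of the paper's identities \eqref{eq:CEfluid2}, \eqref{eq:trans100}, \eqref{eq:trans101}, so no substantive difference or gap.
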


\begin{proof}
We start with the momentum equation \eqref{moment} where we use $\vU$ as a test function\footnote{More precisely we use $\vU\psi^\ep$ as a test function, where $\psi(t)$ is a characteristic function of the interval $[0,\tau]$ and $\psi^\ep$ is its mollification around the point $t = \tau$. Then passing with $\ep$ to zero we recover \eqref{moment2}.}. We get
\begin{align}\label{moment2}
&\int_0^\tau \int_{\Omega}(\vr \vu)\cdot \partial_t
\vU + [\vr \vu\otimes \vu]: \tn{D}(\vU)
+p\mbox{div}_x\vU - \tn{S}(\nabla_x\vu): \tn{D}(\vU) \dxdt \\ \nonumber
& \quad = \int_\Omega (\vr\vu\cdot\vU)(\tau,\cdot)\dx  -\int_\Omega (\vr\vu)_0\cdot\vU(0,\cdot) \dx
\end{align}
Next we use the continuity equation \eqref{continu} with test function $\frac{1}{2}|\vU|^2$ to obtain
\begin{equation}
\int_0^\tau \int_{\Omega}\vr\vU\cdot \partial_t \vU + \vr
(\vu\cdot\nabla_x \vU) \cdot \vU \dx \dt = \int_\Omega \frac 12 \vr|\vU|^2(\tau,\cdot) \dx - \int_\Omega\frac 12 \vr_0|\vU|^2(0,\cdot) \dx. \label{continu2}
\end{equation}
Summing the energy inequality \eqref{energy} with \eqref{continu2} and subtracting \eqref{moment2} we end up with
\begin{align}\label{eq:REIv1}
    &\int_\Omega \left(\frac 12 \vr |\vu-\vU|^2 + P(\vr)\right)(\tau,\cdot) \dx + \int_0^\tau\int_\Omega (\tn{S}(\nabla_x\vu) - \tn{S}(\nabla_x\vU)):\tn{D}(\vu-\vU)\dxdt \\ \nonumber
    &\quad \leq \int_\Omega \left( \frac 12 \frac{|(\vr\vu)_0-\vr_0\vU(0)|^2}{\vr_0} + P(\vr_0)\right) \dx \\ \nonumber
    & + \int_0^\tau \int_\Omega \vr(\de_t \vU + \vu\cdot\nabla_x\vU)\cdot (\vU-\vu) + \tn{S}(\nabla_x\vU):\tn{D}(\vU-\vu) - p(\vr)\Div \vU \dxdt.
\end{align}
It is easy to observe that 
\begin{equation}\label{eq:triv}
    \int_{\B(\tau)} P(\vr)(\tau,\cdot)\dx = \int_{\B_0} P(\vr_0) \dx.
\end{equation}
Next, we observe that the weak formulation of the continuity equation can actually be split into the fluid part and the solid part. For this purpose we recall that on the body we have \eqref{eq:CE_body} and therefore using the transport theorem which
we recall in the Appendix, see \eqref{trans}, we have
\begin{equation*}
\int_0^\tau \int_{\B(t)}\vr_\B \partial_t \phi + \vr_\B
\vu_\B\cdot\nabla_x \phi \dx \dt = \int_{\B(\tau)} \vr_\B\phi(\tau,\cdot)\dx - \int_{\B_0} \vr_B(0)\phi(0,\cdot) \dx.
\end{equation*}
This means that we have also the same equation satisfied on the fluid part
\begin{equation}\label{eq:CEfluid}
\int_0^\tau \int_{\F(t)}\vr_\F \partial_t \phi + \vr_\F
\vu_\F\cdot\nabla_x \phi \dx \dt = \int_{\F(\tau)} \vr_\F\phi(\tau,\cdot)\dx - \int_{\F_0} \vr_F(0)\phi(0,\cdot) \dx.
\end{equation}
We use $P'(r)$ as a test function in the continuity equation \eqref{eq:CEfluid}
\begin{equation}\label{eq:CEfluid2}
\int_0^\tau \int_{\F(t)}\vr \partial_t P'(r) + \vr
\vu\cdot\nabla_x P'(r) \dx \dt = \int_{\F(\tau)} \vr P'(r)(\tau,\cdot)\dx - \int_{\F_0} \vr_0 P'(r)(0,\cdot) \dx.
\end{equation}
Moreover we use the transport theorem once again to get
\begin{equation}\label{eq:trans100}
\int_0^\tau \int_{\F(t)} \partial_t p(r) \dx \dt + \int_0^\tau \int_{\de \B(t)} p(r)\vu\cdot\vn \,\d S \dt = \int_{\F(\tau)} p(r)(\tau,\cdot)\dx - \int_{\F_0} p(r)(0,\cdot) \dx.
\end{equation}
Finally, we have
\begin{equation}\label{eq:trans101}
    \int_0^\tau \int_{\F(t)} r\vU\cdot\nabla_x P'(r) + p(r)\Div \vU \dxdt = \int_0^\tau \int_{\de \B(t)} p(r)\vU\cdot\vn\,\d S \dt .
\end{equation}
We combine \eqref{eq:REIv1}, \eqref{eq:CEfluid2}, \eqref{eq:trans100} and \eqref{eq:trans101} and realize that $\tn{D}(\vu) = \tn{D}(\vU) = 0$ on $\B(t)$ and therefore also $\Div \vu = \Div \vU = 0$ on $\B(t)$. This yields the relative entropy inequality \eqref{eq:REI} with the remainder term \eqref{eq:R}.
\end{proof}

\section{Change of coordinates}\label{s:cc}

Suppose that $(\vr_1,\vu_1)$, $\{\eta_1,\overline{\B}_1\}$ is a weak solution to \eqref{fluidmotion}-\eqref{bodymotion} and $(\vr_{2 \F},\vu_{2 \F},\X_2,\vw_2)$ is a strong solution. We denote by $\B_1(t)$ and $\F_1(t)$ the domains occupied by, respectively, solid and fluid at time $t$ for the weak solution and, analogously, $\B_2(t)$ and $\F_2(t)$ for the strong solution. We define $\vV_2 = \frac{\d}{\dt}\X_2$ and
\begin{equation}
\vu_2 =
\left|
\begin{array}{c}
\vu_{2 \mathcal{F}}\ \ \mbox{in}\ Q_{\F 2},\\\
\vV_2 + \vw_2\times(\x-\X_2)\ \ \mbox{in}\ Q_{\B 2}.
\end{array}
\right.
\end{equation}

There is no reason a priori, why the body $\B_1(t)$ of the weak solution would not touch the boundary $\de\Omega$ at some time instant $t < T$. Therefore we introduce time $T_{min}$ such that
\begin{equation} \label{eq:tmin}
    T_{min} = \inf \left\{t \in (0,T); d(\B_1(t),\de\Omega) \leq \frac{\kappa}{2}\right\}, 
\end{equation}
where $\kappa$ is given by the property \eqref{eq:dist_cond} of the strong solution. In particular we have $T_{min} > 0$ and on the interval $(0,T_{min})$ there is no collision between the body $\B_1$ and the boundary $\de \Omega$. We perform our further analysis on the time interval $(0,T_{min})$, however in order to simplify the notation we keep denoting the time interval by $(0,T)$. We will return to the notation $T_{min}$ at the very end of the proof of Theorem \ref{t3} in the final paragraph of Section \ref{s:proof}.

 Our goal here is to transform the strong solution to the domain of the weak solution in such a way that $\B_2(t)$ is transformed to $\B_1(t)$. For this purpose we introduce cutoff functions $\zeta_i(t,\x)$ for $i=1,2$ such that $\zeta_i(t,\x)=1$ in a neighborhood of $\B_i(t)$ and $\zeta_i(t,\x)=0$ in a neighborhood of $\de \Omega$. Moreover we extend the domain of definition of the rigid body motions $\vu_{i\B}$ to the whole $(0,T)\times \Omega$, i.e. for $\x \in \Omega$ we have
\begin{equation*}
\vu_{i \mathcal{B}}(t,\x)=\vV_i(t) + \vw_i(t)\times (\x-\X_i(t)).    
\end{equation*}
Then we set 
\begin{equation*}
\Lambda_i(t,\x)=\zeta_i(t,\x)\vu_{i\B}(t,\x).     
\end{equation*}
Notice that $\Lambda_i$ is a rigid motion in a neighborhood of $\B_i(t)$ and vanishes in a vicinity of $\de \Omega$. 
Now we define transformations 
$$
\bZ_i:\Om \to \Om
$$ 
as solutions to ODEs 
\begin{align*}
&\frac{\d}{\d t}\bZ_i(t,\y)=\Lambda_i(t,\bZ_i(t,\y)) \quad \forall \y \in \Om, \; t \in (0,T),\\    
&\bZ_i(0,\y)=\y.
\end{align*}
Then we introduce $\bY_i=\bZ_i^{-1}$ and define mappings $\widetilde{\bZ}_i:\Om \to \Om$ as 
\begin{align}
\widetilde{\bZ}_1(t,\x)&=\bZ_1(t,\bY_2(t,\x)), \\
\widetilde{\bZ}_2(t,\x)&=\bZ_2(t,\bY_1(t,\x)). 
\end{align}
Note that $\widetilde{\bZ}_1(t,\cdot) = \widetilde{\bZ}_2^{-1}(t,\cdot)$ and $\widetilde{\bZ}_2(t,\B_1(t)) = \B_2(t)$ for all $t \in [0,T)$. In particular in the neighborhoods of the body $\B_{2}(t)$ the mapping $\widetilde{\bZ}_1(t,\cdot)$ is rigid and vice versa, more precisely we have
\begin{align}
\widetilde{\bZ}_1(t,\x) &= \X_1(t) + \tn{O}_1(t)\tn{O}_2^T(t)(\x - \X_2(t)) \qquad \text{ in the neighborhood of } \B_2(t), \\
\widetilde{\bZ}_2(t,\x) &= \X_2(t) + \tn{O}_2(t)\tn{O}_1^T(t)(\x - \X_1(t)) \qquad \text{ in the neighborhood of } \B_1(t). \label{eq:bzexpress}
\end{align}
For simplicity of notation we denote 
\begin{equation} \label{def:tildeo}
\Ot(t) = \tn{O}_2(t)\tn{O}_1^T(t).    
\end{equation}
We define now the transformed strong solution $(r^s,\bU^s)$ as 
\begin{align} \label{newsol1}
r^s(t,\x) &=\vr_{2 \F}(t,\widetilde{\bZ}_2(t,\x)) \qquad \qquad \qquad \quad \text{ for all } (t,\x) \in Q_{\F 1}, \\ \label{newsol2}
\bU^s(t,\x) &=\tn{J}_{\widetilde{\bZ}_1}(t,\widetilde{\bZ}_2(t,\x))\vu_2(t,\widetilde{\bZ}_2(t,\x)) \quad \text{ for all } (t,\x) \in Q_T,
\end{align}
where $(\tn{J}_{\widetilde{\bZ}_1})_{ij}(t,\widetilde{\bZ}_2(t,\x)) = \frac{\de (\widetilde{\bZ}_1)_i}{\de \x_j}(t,\widetilde{\bZ}_2(t,\x))$. 
In particular we have for $\x \in \B_1(t)$
\begin{equation} \label{newsol3}
    \bU^s(t,\x) = \vV^s(t) + \vw^s(t) \times (\x-\X_1(t))
\end{equation}
with
\begin{equation}\label{eq:strongbody}
    \vV^s(t) = \Ot^T(t)\vV_2(t), \qquad \qquad \vw^s(t) = \Ot^T(t)\vw_2(t).
\end{equation}

\begin{Lemma}\label{l:31}
\begin{itemize}
    \item[(i)] It holds
    \begin{equation}
        \Ot^T(t)\frac{\d \Ot}{\dt}(t)\x = (\vw^s-\vw_1)(t)\times \x.
    \end{equation}
    \item[(ii)] The following estimates hold for $\x \in \de \B_1(t)$, $t \in (0,T)$
    \begin{align}\label{eq:315}
        |\widetilde{\bZ}_2(t,\x) - \x| &\leq C\left(\|\vV_1-\vV^s\|_{L^2(0,t)} + \|\vw_1-\vw^s\|_{L^2(0,t)}\right),\\ \label{eq:316}
        |\de_t \widetilde{\bZ}_2(t,\x)| &\leq C\left(|\vV_1-\vV^s|(t) + |\vw_1-\vw^s|(t)\right).
    \end{align}
    \item[(iii)] The following estimates hold for $t \in (0,T)$
    \begin{align}\label{eq:317}
        \|\widetilde{\bZ}_2(t,\cdot) - \mathrm{id}\|_{W^{3,\infty}(\F_1(t))} &\leq C\left(\|\vV_1-\vV^s\|_{L^2(0,t)} + \|\vw_1-\vw^s\|_{L^2(0,t)}\right),\\
        \|\de_t \widetilde{\bZ}_2(t,\cdot)\|_{W^{1,\infty}(\F_1(t))} &\leq C\left(|\vV_1-\vV^s|(t) + |\vw_1-\vw^s|(t)\right). \label{eq:318}
    \end{align}
\end{itemize}
\end{Lemma}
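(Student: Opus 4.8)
The plan is to treat the three items in order; (i) and (ii) are short and essentially explicit, while (iii) is the technical core and will rest on a transport equation for $\widetilde{\bZ}_2$. Throughout I work on $(0,T_{min})$ (which, as agreed, I keep denoting $(0,T)$), where $\overline{\B_1(t)}$ and $\overline{\B_2(t)}$ stay at distance $\geq \kappa/2$ from $\de\Omega$, so I may take a single fixed cutoff $\zeta_1=\zeta_2=:\zeta$ with $\zeta\equiv 1$ on $\{d(\cdot,\de\Omega)>\kappa/2\}$ and $\zeta\equiv 0$ near $\de\Omega$. For (i): writing $\tn{Q}_i=\frac{\d\tn{O}_i}{\dt}\tn{O}_i^T$ (skew-symmetric, with $\tn{Q}_i\x=\vw_i\times\x$), I differentiate $\Ot=\tn{O}_2\tn{O}_1^T$ to get $\frac{\d\Ot}{\dt}=\tn{Q}_2\Ot-\Ot\tn{Q}_1$, hence $\Ot^T\frac{\d\Ot}{\dt}=\Ot^T\tn{Q}_2\Ot-\tn{Q}_1$; since $R(\vc{a}\times\vc{b})=(R\vc{a})\times(R\vc{b})$ for $R\in SO(3)$ one has $\Ot^T\tn{Q}_2\Ot\,\x=(\Ot^T\vw_2)\times\x=\vw^s\times\x$ by \eqref{eq:strongbody}, so $\Ot^T\frac{\d\Ot}{\dt}\x=(\vw^s-\vw_1)\times\x$, which is (i).

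For (ii): by \eqref{eq:bzexpress}, on a neighbourhood of $\overline{\B_1(t)}$ — in particular on $\de\B_1(t)$ — one has $\widetilde{\bZ}_2(t,\x)=\X_2(t)+\Ot(t)(\x-\X_1(t))$. Differentiating in $t$ and substituting $\vV_2=\Ot\vV^s$ and, from (i), $\frac{\d\Ot}{\dt}(\x-\X_1)=\Ot[(\vw^s-\vw_1)\times(\x-\X_1)]$ produces the clean identity $\de_t\widetilde{\bZ}_2(t,\x)=\Ot(t)\big[(\vV^s-\vV_1)(t)+(\vw^s-\vw_1)(t)\times(\x-\X_1(t))\big]$; as $\Ot(t)\in SO(3)$ and $|\x-\X_1(t)|\leq \mathrm{diam}\,\overline{\B_0}$ on $\overline{\B_1(t)}$, this gives \eqref{eq:316}. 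For \eqref{eq:315} I write $\widetilde{\bZ}_2(t,\x)-\x=(\X_2-\X_1)(t)+(\Ot(t)-\tn{I})(\x-\X_1(t))$ and integrate from $t=0$: by (i), $\|\Ot(t)-\tn{I}\|\leq C\int_0^t|\vw^s-\vw_1|\,\d s\leq C\|\vw^s-\vw_1\|_{L^2(0,t)}$, while $\vV_2-\vV_1=(\Ot-\tn{I})\vV^s+(\vV^s-\vV_1)$ with $|\vV^s|=|\vV_2|$ bounded on $[0,T]$ (Theorem \ref{t2}) yields $|\X_1-\X_2|(t)\leq C\big(\|\vV^s-\vV_1\|_{L^2(0,t)}+\|\vw^s-\vw_1\|_{L^2(0,t)}\big)$. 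I record two consequences used below: the auxiliary bound $\|\Ot(t)-\tn{I}\|+|\X_1(t)-\X_2(t)|\leq C\big(\|\vV_1-\vV^s\|_{L^2(0,t)}+\|\vw_1-\vw^s\|_{L^2(0,t)}\big)$, and the fact $\vV_1,\vw_1\in L^\infty(0,T)$ — testing \eqref{energy} on $\B_1(\tau)$ gives $m|\vV_1(\tau)|^2+\tn{J}(\tau)\vw_1(\tau)\cdot\vw_1(\tau)\leq 2E_0$, with $\tn{J}(\tau)$ positive definite and of $\tau$-independent spectrum.

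For (iii), the $W^{3,\infty}$ bound: differentiating $\widetilde{\bZ}_2=\bZ_2\circ\bY_1$ in $t$ and using the standard inverse-flow identity $\de_t\bY_1=-(\nabla\bY_1)\Lambda_1$ gives the transport identity $\de_t\widetilde{\bZ}_2+(\nabla_\x\widetilde{\bZ}_2)\Lambda_1(t,\cdot)=\Lambda_2(t,\widetilde{\bZ}_2)$, $\widetilde{\bZ}_2(0,\cdot)=\mathrm{id}$, equivalently, for $\vc{W}:=\widetilde{\bZ}_2-\mathrm{id}$, $\de_t\vc{W}+(\nabla\vc{W})\Lambda_1=\Lambda_2(\mathrm{id}+\vc{W})-\Lambda_1$. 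Because each $\Lambda_i(t,\cdot)=\zeta\vu_{i\B}(t,\cdot)$ has all its spatial $W^{k,\infty}$ norms bounded uniformly in $t\leq T$ (its coefficients $\vV_i,\vw_i$ being bounded, for $i=1$ by the remark just made), the flow $\bZ_1$ of $\Lambda_1$ is, for each $t\leq T$, a diffeomorphism of $\Omega$ with $W^{k,\infty}$ norms bounded uniformly in $t$; integrating the transport identity and its $\x$-derivatives up to order $3$ along the characteristics $t\mapsto\bZ_1(t,\y)$ converts them into Gronwall inequalities for $\|\vc{W}(t,\cdot)\|_{W^{j,\infty}}$, $j=0,\dots,3$, forced by the right-hand side (plus, for $j\geq 1$, lower-order $\vc{W}$-derivatives against uniformly bounded $\Lambda_1$-derivatives). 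That right-hand side vanishes off $\{\zeta>0\}$; on the neighbourhood of $\B_1(t)$ where $\widetilde{\bZ}_2$ is rigid it equals $\Ot[(\vV^s-\vV_1)+(\vw^s-\vw_1)\times(\x-\X_1)]+(\Ot-\tn{I})\vu_{1\B}$ (cf.\ (ii), noting $(\nabla\vc{W})\Lambda_1=(\Ot-\tn{I})\vu_{1\B}$ there), while on the transition annulus I expand $\Lambda_2(\mathrm{id}+\vc{W})-\Lambda_1$ about $\x$ using $\vV_2-\vV_1=(\Ot-\tn{I})\vV^s+(\vV^s-\vV_1)$, the analogous identity for $\vw$, the boundedness of $\vu_{2\B}$, and the inductively available smallness of $\vc{W}$; in all cases the order-$j$ forcing is $\leq C\big(|\vV_1-\vV^s|(t)+|\vw_1-\vw^s|(t)+\|\Ot(t)-\tn{I}\|+|\X_1-\X_2|(t)+\|\vc{W}(t,\cdot)\|_{W^{j,\infty}}\big)$. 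Gronwall, the auxiliary bound from (ii), and $\int_0^t\|\cdot\|_{L^2(0,s)}\,\d s\leq T\|\cdot\|_{L^2(0,t)}$ then give \eqref{eq:317} by induction on $j$.

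Finally \eqref{eq:318}: with \eqref{eq:317} available I estimate $\de_t\widetilde{\bZ}_2=\Lambda_2(\widetilde{\bZ}_2)-(\nabla\widetilde{\bZ}_2)\Lambda_1$ and its first $\x$-derivative directly. The hard part of the whole lemma is exactly the transition annulus $\{0<\zeta<1\}$: there $\widetilde{\bZ}_2$ is neither the rigid motion (so the explicit computation of (ii) is unavailable) nor the identity, and $\Lambda_2(\widetilde{\bZ}_2)$ and $(\nabla\widetilde{\bZ}_2)\Lambda_1$ are each only bounded uniformly in $t$, not by the velocity differences — so one must keep the two terms together and exhibit their near-cancellation. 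This cancellation is exact in the rigid zone (where it reproduces the identity of (ii)) and, on the annulus, is obtained by Taylor-expanding $\Lambda_2$ about $\x$ and invoking the $W^{3,\infty}$-smallness of $\vc{W}$ together with $\vV_2-\vV_1=(\Ot-\tn{I})\vV^s+(\vV^s-\vV_1)$ (and its $\vw$-analogue), leaving a remainder $\leq C\big(|\vV_1-\vV^s|(t)+|\vw_1-\vw^s|(t)\big)$. Carrying out this cancellation carefully, and the three-order derivative bookkeeping behind \eqref{eq:317}, is the bulk of the work; the remaining ingredients (the flow estimates for $\bZ_1,\bZ_2,\bY_1$ and the Gronwall manipulations) are routine.
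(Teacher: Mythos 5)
Your parts (i) and (ii) are correct and essentially the paper's own computation: you differentiate $\Ot=\tn{O}_2\tn{O}_1^T$ and use the rotation identity $R(\vc{a}\times\vc{b})=(R\vc{a})\times(R\vc{b})$ for (i), and you differentiate \eqref{eq:bzexpress} to get the same identity $\de_t\widetilde{\bZ}_2=\Ot\big[(\vV^s-\vV_1)+(\vw^s-\vw_1)\times(\x-\X_1)\big]$ for \eqref{eq:316}; your route to \eqref{eq:315} via separate bounds on $\|\Ot-\tn{I}\|$ and $|\X_1-\X_2|$ is an equivalent variant of the paper's direct integration of \eqref{eq:detZ}. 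For (iii) the paper gives no details (it only cites a standard construction from Geissert--G\"otze--Hieber), and your transport identity $\de_t\widetilde{\bZ}_2+(\nabla_x\widetilde{\bZ}_2)\Lambda_1=\Lambda_2(\widetilde{\bZ}_2)$ combined with Gronwall along the characteristics of $\Lambda_1$, together with the observation that $\vV_1,\vw_1\in L^\infty(0,T)$ follows from the energy inequality, is a sound and self-contained way to obtain the $W^{3,\infty}$ estimate \eqref{eq:317}.

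The gap is in \eqref{eq:318}, at exactly the step you yourself flag as the crux. On the transition annulus you assert that $\Lambda_2(\widetilde{\bZ}_2)$ and $(\nabla_x\widetilde{\bZ}_2)\Lambda_1$ nearly cancel, leaving a remainder bounded by the instantaneous quantities $|\vV_1-\vV^s|(t)+|\vw_1-\vw^s|(t)$, but you never exhibit this cancellation, and your own Taylor expansion only controls the remainder by terms of the form $\|\vc{W}(t,\cdot)\|_{W^{1,\infty}}$ and $\|\Ot(t)-\tn{I}\|+|\X_1-\X_2|(t)$ multiplied by the (merely bounded) body velocities, i.e.\ by the time-integrated quantities $\|\vV_1-\vV^s\|_{L^2(0,t)}+\|\vw_1-\vw^s\|_{L^2(0,t)}$ appearing in \eqref{eq:317}, not by the instantaneous differences. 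Indeed no exact cancellation can be expected for the flow-based map $\widetilde{\bZ}_2=\bZ_2\circ\bY_1$: one has $\de_t\widetilde{\bZ}_2(t,\cdot)=\Lambda_2(t,\cdot)-(\widetilde{\bZ}_2)_{*}\Lambda_1(t,\cdot)$ evaluated at $\widetilde{\bZ}_2$, and while on the rigid zone the push-forward of the rigid field $(\vV_1,\vw_1)$ is again rigid (which is why \eqref{eq:316} is exact there), on the annulus it is not. For instance, with trivial rotations, $\vV_1\equiv V\vc{e}_1$, and $\vV_2$ equal to $V\vc{e}_1$ except for an earlier excursion in the direction $\vc{e}_2$, at a time $t$ with $\vV_2(t)=\vV_1(t)$ the right-hand side of \eqref{eq:318} vanishes, while $\de_t\widetilde{\bZ}_2(t,\cdot)=V\big[\zeta(\widetilde{\bZ}_2)\vc{e}_1-\zeta\,\nabla_x\widetilde{\bZ}_2\,\vc{e}_1\big]$ is generically nonzero on the annulus because the fields $\zeta\vc{e}_1$ and $\zeta\vc{e}_2$ do not commute there, so $\widetilde{\bZ}_2$ does not push $\zeta\vc{e}_1$ onto itself; the surviving term is of the size of the accumulated past differences. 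What your argument genuinely proves is \eqref{eq:318} with the additional terms $\|\vV_1-\vV^s\|_{L^2(0,t)}+\|\vw_1-\vw^s\|_{L^2(0,t)}$ on the right-hand side. That weaker form is all the subsequent relative-energy estimates of the paper need (after squaring, these terms are again dominated by $\int_0^t\mathcal{E}\,{\rm d}s$ and are absorbed by Gronwall), but as a proof of the lemma as literally stated the final step is missing, and for this definition of $\widetilde{\bZ}_2$ it cannot be supplied without modifying the statement.
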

\begin{proof}
\begin{enumerate}
    \item[(i)] We calculate
    \begin{align}
        \frac{\d \Ot}{\dt} &= \frac{\d \tn{O}_2}{\dt}\tn{O}_1^T + \tn{O}_2\frac{\d \tn{O}_1^T}{\dt} = \tn{O}_2\tn{O}_1^T\tn{O}_1\tn{O}_2^T\frac{\d \tn{O}_2}{\dt}\tn{O}_1^T - \tn{O}_2\tn{O}_1^T\frac{\d \tn{O}_1}{\dt}\tn{O}_1^T \\
        &=\Ot\left(\tn{O}_1\tn{O}_2^T\frac{\d \tn{O}_2}{\dt}\tn{O}_1^T - \frac{\d \tn{O}_1}{\dt}\tn{O}_1^T\right) \nonumber
    \end{align}
    and therefore 
    \begin{equation}
        \Ot^T\frac{\d \Ot}{\dt}\x = \left(\tn{O}_1\tn{O}_2^T\frac{\d \tn{O}_2}{\dt}\tn{O}_1^T - \frac{\d \tn{O}_1}{\dt}\tn{O}_1^T\right)\x.
    \end{equation}
    Since by definition it is $\frac{\d \tn{O}_1}{\dt}\tn{O}_1^T\x = \vw_1 \times \x$, it remains to show that $\tn{O}_1\tn{O}_2^T\frac{\d \tn{O}_2}{\dt}\tn{O}_1^T\x = \vw^s \times \x$.
    For this purpose we notice that for any rotation matrix $\mathbb{R}$ and vectors $\mathbf{a}, \mathbf{b}$ we have   
    \begin{equation} \label{eq:rot}
    \mathbb{R}\mathbf{a} \times \mathbb{R}\mathbf{b} = \mathbb{R}(\mathbf{a}\times \mathbf{b}).
    \end{equation}
    Therefore, using the definition of $\vw_s$ and $\vw_2$
    \begin{equation}
        \vw^s \times \x = \Ot^T\vw_2 \times \Ot^T \Ot\x = \tn{O}_1\tn{O}_2^T(\vw_2 \times \tn{O}_2\tn{O}_1^T\x) = \tn{O}_1\tn{O}_2^T\frac{\d \tn{O}_2}{\dt}\tn{O}_2^T\tn{O}_2\tn{O}_1^T\x.
    \end{equation}
    \item[(ii)]
     We derive \eqref{eq:bzexpress} with respect to time to get
\begin{align}\label{eq:detZ}
    \de_t\widetilde{\bZ}_2(t,\x) &= \vV_2(t) + \frac{\d \Ot}{\dt}(t)(\x - \X_1(t)) - \Ot(t)\vV_1(t) \\
    &= \Ot(t)\left(\vV^s(t) - \vV_1(t) + \Ot^T(t)\frac{\d \Ot}{\dt}(t)(\x - \X_1(t))\right) \nonumber \\ \nonumber
    &= \Ot(t)\big(\vV^s(t) - \vV_1(t) + (\vw^s-\vw_1)(t)\times(\x - \X_1(t))\big),
\end{align}
which proves \eqref{eq:316}, because both $\X_1$ and $\Ot$ are uniformly bounded. In order to prove \eqref{eq:315} we integrate \eqref{eq:detZ} with respect to time and obtain
\begin{equation} \label{eq:Z2}
    \left(\widetilde{\bZ}_2 - \mathrm{id}\right)(t,\x) = \int_0^t \Ot(\tau)\big(\vV^s(\tau) - \vV_1(\tau) + (\vw^s-\vw_1)(\tau)\times(\x - \X_1(\tau))\big) \d \tau.
\end{equation}
\item[(iii)] The estimates \eqref{eq:317} and \eqref{eq:318} follow from \eqref{eq:315} and \eqref{eq:316} by means of a standard construction of change of variables similar as is described in \cite[Proposition 6.1]{GGH13}.

\end{enumerate}
\end{proof}

\begin{Remark}
 Let us mention that such type of transformation was introduced by Inoue, Wakimoto \cite{inoue1977existence} and properties of such transformation was described in details by Takahashi, see \cite{T}.
 
 The same transformation has been used in \cite{CNM,MNR} to show the weak-strong uniqueness property for the incompressible fluid-structure interaction problem. Although when working with compressible models one usually does not need to define the transformed velocity as multiplied by the Jacobian of the change of coordinates, since there is no need to keep the velocity divergence-free, Lemma \ref{l:31} reveals the importance of using this definition in our framework.

\end{Remark}


The transformed strong solution satisfies the following system of equations in the fluid part of the domain $\Omega$
\begin{equation}\label{eq:fluidtransformed}
\left\{
\begin{array}{ccc}
\de_t r^s + \dv_{x}(r^s\bU^s)=G(r^s,\bU^s) & {\rm in} & Q_{\F 1},\\
\de_t(r^s \bU^s) + \dv_{x}(r^s \bU^s \otimes \bU^s)-\dv_{x}\tn{S}(\nabla_x \vU^s)
+\nabla_{x}p(r^s) = \mathbf{F}(r^s,\bU^s) & {\rm in} & Q_{\F 1}, \\

\bU^s=0 & {\rm on} & (0,T) \times \de \Omega,\\
r^s(0,\cdot)=\vr_{\F 0} & {\rm in} & \Om \setminus \B_0, \\
\bU^s(0,\cdot)= \vu_{\F 0} & {\rm in} & \Om\setminus \B_0.
\end{array}
\right.
\end{equation}

In order to express the terms on the right hand sides of the transformed continuity and momentum equations we introduce the following notation, where we use the summation convention.
\begin{align} \label{eq:Hdef}
    \tn{H} &= (\nabla_x\ZZ)^{-1} = \nabla_x \ZZj, \quad \text{ i.e. } (\tn{H})_{ij} = \de_{j} (\ZZj)_i,\\ \label{eq:GGdef}
    \tn{G} &= \tn{H}\tn{H}^T, \quad \text{ i.e. } (\tn{G})_{ij} = \de_{k} (\ZZj)_i \de_{k} (\ZZj)_j, \\
    \Gamma^i_{\alpha \beta} &= \de_{l} (\ZZj)_i \de_{\alpha\beta} (\ZZ)_l. \label{eq:Gammadef}
\end{align}
Using these expressions we have
\begin{align} \label{eq:Gdef}
G(r^s,\vU^s) &= 
\tn{H}_{jk}\de_t(\ZZ)_k \de_j r^s, \\ \label{eq:Fdef}
\mathbf{F}_i(r^s,\vU^s) &= -r^s\tn{H}_{ij}\de_t\de_\beta(\ZZ)_j\vU^s_\beta + r^s\Gamma^i_{\alpha\beta}\tn{H}_{\alpha j} (\de_t \ZZ)_j \vU^s_\beta + \tn{H}_{\alpha j} (\de_t \ZZ)_j \de_\alpha (r^s\vU^s_i) \\ \nonumber
&\quad - r^s\Gamma^i_{\alpha\beta} \vU^s_\alpha\vU^s_\beta + (\mu+\lambda)(\tn{G}-\tn{I})_{i\alpha}\de_\alpha \dv_{x}\vU^s - (\tn{G}-\tn{I})_{i\alpha}\de_\alpha p(r^s) \\ \nonumber
&\quad +\mu((\tn{G}-\tn{I})_{\alpha\beta}\de_{\alpha\beta}\vU^s_i + \Delta_x (\ZZj)_\beta \de_\beta \vU^s_i + 2\Gamma^i_{\alpha\gamma}\tn{G}_{\alpha\beta}\de_\beta \vU^s_\gamma) \\ \nonumber
&\quad + \mu(\Gamma^i_{\alpha\gamma}\Delta_x (\ZZj)_\alpha\vU^s_\gamma + \tn{H}_{ij}\tn{G}_{\alpha\beta}\de_{\alpha\beta\gamma}(\ZZ)_j\vU^s_\gamma).
\end{align}
Expressing the equations on the rigid body we have
\begin{align} \label{eq:rigidtransformed1}
    m\frac{\d \vV^s}{\dt} &= -m (\vw^s-\vw_1)\times \vV^s - \int_{\partial\mathcal{B}_1(t)} \left(\tn{S}(\nabla_x\vU^s) - p(r^s)\tn{I}\right)\vc{n}\, \d S\quad
 \text{ in } (0,T), \\ \label{eq:rigidtransformed2}
 \tn{J}_1\frac{\d \vw^s}{\dt} &= \tn{J}_1\vw^s\times\vw^s - \tn{J}_1\left((\vw^s-\vw_1)\times \vw^s\right) \\ \nonumber &\quad - \int_{\partial\mathcal{B}_1(t)} (\x-\X_1(t)) \times \left(\tn{S}(\nabla_x\vU^s) - p(r^s)\tn{I}\right)\vc{n}\, \d S\quad \text{ in } (0,T),\\ \label{eq:rigidtransformed3}
 \vV^s(0) &= \vV_{0},\qquad \vw^s(0)= \vw_{0}, 
\end{align}
 where we used $\tn{J}_1 = \Ot^T \tn{J}_2\Ot$. The formulas \eqref{eq:Hdef}-\eqref{eq:rigidtransformed2} are obtained by lengthy although direct calculations similar to the proof of Lemma 3.1 from \cite{CNM}.

\section{Proof of Theorem \ref{t3}} \label{s:proof}
In order to show the weak-strong uniqueness property we would like 
to apply the relative energy inequality \eqref{eq:REI} with $r = r^s$ and $\bU=\bU^s$.
However, recall that $\bU$ in the energy inequality must in particular satisfy the condition \eqref{dzero}, whereas $\bU^s$ is a rigid motion only on $\B_1(t)$. Therefore we have to modify it slightly. For this purpose for given $\ep>0$ we introduce a smooth cutoff function $\xi^{\ep}:Q_T \to Q_T$ such that $\xi^{\ep}(t,\x) \in [0,1]$ and 
\begin{equation}
\xi^{\ep}(t,\x)=\left\{ \begin{array}{l}
1, \quad \x \in [\B_1(t)]_{\ep},\\
0, \quad \x \in \Om \setminus [\B_1(t)]_{2\ep}.
\end{array}
\right.
\end{equation}
Now we define 
\begin{equation}
\bU^s_{\ep}=\xi_{\ep}\bU^s_{\B}+(1-\xi_{\ep})\bU^s_{\F}.    
\end{equation}
and we have 
\begin{align} \label{prop_uep}
\tn{D}(\bU^s_{\ep})=0 \; {\rm on} \; [\B_1(t)]_{\ep}, \quad 
\bU^s_{\ep}=\bU^s \; {\rm on} \; \B_1(t) \cup (\Om \setminus [\B_1(t)]_{2\ep}), 
\end{align}
where we denote by $X_\ep$ the $\ep$-neighborhood of the set $X$.

Now we apply the relative energy inequality \eqref{eq:REI} with $(\vr,\vu) = (\vr_1,\vu_1)$ being the weak solution and $(r,\vU) = (r^s,\bU^{s}_{\ep})$ being the test functions. In what follows, to keep the notation simple, we will use $(\vr,\vu)$ instead of $(\vr_1,\vu_1)$, $(r,\vU_\ep)$ instead of $(r^s,\vU_\ep^s)$ and $\B(t), \F(t)$ instead of $\B_1(t), \F_1(t)$. Let us first examine the left hand side of the relative energy inequality \eqref{eq:REI}. We define 
\begin{equation}
\Gamma^\ep(t) =  [\B(t)]_{2\ep} \setminus \B(t),  \quad
\F^\ep(t) = \Om\setminus [\B(t)]_{2\ep}.
\end{equation}
We write 
\begin{equation*}
     \int_\Omega \frac 12 \vr\left|\vu-\vU_\ep\right|^2 (\tau,\cdot) \dx = \int_\Omega \frac 12 \vr\left(\left|\vu-\vU\right|^2 + 2(\vu-\vU)\cdot(\vU-\vU_\ep) + |\vU-\vU_\ep|^2\right) (\tau,\cdot)\dx
\end{equation*}
and observing that $\vU \neq \vU_\ep$ only in the set $\Gamma^\ep$ we show that
\begin{equation}\label{eq:UUep}
    \lim_{\ep\rightarrow 0} \int_{\Gamma^\ep(\tau)} \left(\vr(\vu-\vU)\cdot(\vU-\vU_\ep) +  \frac 12\vr \left|\vU-\vU_\ep\right|^2\right) (\tau,\cdot) \dx = 0.
\end{equation}
Indeed, we have
\begin{align*}
&\left|\int_{\Gamma^\ep(\tau)} \vr(\vu-\vU)\cdot(\vU-\vU_\ep)(\tau,\cdot) \dx\right| \leq  \\  
&\qquad \qquad |\Gamma^\ep(\tau)|^\beta \left\|\vr|\vu-\vU|^2\right\|_{L^1(\Gamma^\ep(\tau))} \left\|\vr\right\|_{L^\gamma(\Gamma^\ep(\tau))} \left\|\vU-\vU_\ep \right\|_{L^\infty(\Gamma^\ep(\tau))}
\end{align*}
for some $\beta > 0$ and since all norms on the right hand side are bounded, the whole integral converges to zero. Similarly we handle the second term in \eqref{eq:UUep}.

On the rigid body  $\B(t)$ we have 
\begin{align*}
    \vu_{\mathcal{B}}(t,\x) &= \vV_1(t) + \vw_1(t)\times (\x-\X_1(t)), \\
    \vU_{\ep \B}(t,\x) = \vU_{\mathcal{B}}(t,\x) &= \vV^s(t) + \vw^s(t)\times (\x-\X_1(t)),    
\end{align*}
and therefore by \eqref{def:m} and \eqref{def:J}
\begin{equation*}
    \int_{\B(\tau)} \frac 12 \vr\left|\vu-\vU\right|^2 (\tau,\cdot) \dx = \frac{m}{2} |\vV_1-\vV^s|^2(\tau) + \frac{1}{2} \tn{J}_1(\tau)(\vw_1-\vw^s)\cdot(\vw_1-\vw^s)(\tau).
\end{equation*}
One can also observe that there exists a constant $c > 0$ independent of $t$ such that
\begin{equation}
    c|\vw|^2 \leq \tn{J}_1(t)\vw\cdot\vw.
\end{equation}
In the limit $\ep \rightarrow 0$ we therefore recover 
the following lower bound on the left hand side of the relative energy inequality \eqref{eq:REI} 
\begin{align}\label{eq:REIleft}
&\mathcal {E}\Big([\vr,\vu]|[r,\vU]\Big)(\tau) + \int_0^\tau\int_{\F(t)} \left(\tn{S}(\Grad \vu) - \tn{S}(\Grad \vU)\right):\left(\Grad \vu - \Grad \vU\right)\dxdt \geq \\ \nonumber
    &C\left[ \int_{\F(\tau)} \left(\frac 12 \vr\left|\vu-\vU\right|^2  + P(\vr,r)\right)(\tau,\cdot) \dx + |\vV_1-\vV^s|^2(\tau) + |\vw_1-\vw^s|^2(\tau)\right. \\ \nonumber
    &\left. \quad + \int_0^t \int_{\F(t)} (\tn{S}(\nabla_x\vu) - \tn{S}(\nabla_x \vU)):\tn{D}(\vu-\vU) \dxdt \right], \\
\end{align}
with the notation
\begin{equation}
    P(\vr,r) = P(\vr) - P'(r)(\vr-r) - P(r).
\end{equation}
Let us now examine the right hand side of \eqref{eq:REI}. First of all, it is easy to observe that 
\begin{equation*}
\lim_{\ep \rightarrow 0} \mathcal {E}\Big([\vr_0,\vu_0]|[r(0,\cdot),\vU_\ep(0,\cdot)]\Big) = 0,
\end{equation*}
and we have also
\begin{equation*}
\lim_{\ep \rightarrow 0} \int_0^\tau \mathcal{R}(\vr,\vu,r,\vU_\ep) \dt = \int_0^\tau \mathcal{R}(\vr,\vu,r,\vU) \dt.
\end{equation*}
Summing up, taking $\ep \to 0$ we obtain from \eqref{eq:REIleft} and \eqref{eq:e0}
\begin{align}\label{eq:REIleft2}
    &\int_{\F(\tau)} \left(\frac 12 \vr\left|\vu-\vU\right|^2  + P(\vr,r)\right)(\tau,\cdot) \dx + |\vV_1-\vV^s|^2(\tau) + |\vw_1-\vw^s|^2(\tau) \\ \nonumber
    &\quad + \int_0^{\tau} \int_{\F(t)} (\tn{S}(\nabla_x\vu) - \tn{S}(\nabla_x \vU)):\tn{D}(\vu-\vU) \dxdt \\ \nonumber
    &\leq 
    C\left[\mathcal {E}\Big([\vr_0,\vu_0]|[r(0,\cdot),\vU(0,\cdot)]\Big) + \int_0^\tau \mathcal{R}(\vr,\vu,r,\vU)(t) \dt\right].
\end{align}
Furthermore, by \eqref{newsol3} we have 
\begin{equation*} 
\int_{\B(\tau)}\vr|\vu-\vU|^2(\tau,\cdot)\dx \leq C [|\vV_1-\vV^s|^2(\tau)+|\vw_1-\vw^s|^2(\tau)].    
\end{equation*}
Combining this inequality with \eqref{eq:RE} and \eqref{eq:REIleft2} 
we see that 
\begin{equation} \label{eq:e0}
\mathcal{E}\Big([\vr,\vu]|[r,\vU]\Big)(\tau) \leq 
    C\left[\mathcal {E}\Big([\vr_0,\vu_0]|[r(0,\cdot),\vU(0,\cdot)]\Big) + \int_0^\tau \mathcal{R}(\vr,\vu,r,\vU)(t) \dt\right].    
\end{equation}
We therefore analyze the remainder term
\begin{align}\label{eq:R2}
&\mathcal{R}(\vr,\vu,r,\vU)(t) = \int_{\F(t)} \vr(\partial_t\vU + \vu\cdot\Grad\vU)\cdot(\vU-\vu) \dx + \int_{\B(t)} \vr(\partial_t\vU + \vu\cdot\Grad\vU)\cdot(\vU-\vu) \dx \\ \nonumber
& \quad + \int_{\F(t)} \left( \tn{S}(\Grad\vU):(\Grad\vU-\Grad\vu) + \Div\vU (p(r) - p(\vr)) + (r-\vr)\partial_t P'(r)\right) \dx \\ \nonumber
&\quad + \int_{\F(t)} (r\vU - \vr\vu)\cdot\Grad P'(r)\dx + \int_{\de \B(t)} p(r) (\vu-\vU)\cdot \vn\, \d S = \mathcal{I}_1^F(t)+\mathcal{I}_1^B(t)+ \sum_{i=2}^6 \mathcal{I}_i(t).
\end{align}
We start with estimating $\mathcal{I}_1^F$. For this purpose we use the identity
\begin{equation}\label{eq:trick}
    \vu\cdot\nabla_x\vU\cdot(\vU-\vu) = \vU\cdot\nabla_x\vU\cdot(\vU-\vu) + (\vu-\vU)\cdot\nabla_x\vU\cdot(\vU-\vu).
\end{equation}
Since $\nabla_x\vU$ is bounded, we get 
\begin{equation}\label{eq:trick2}
\left|\int_0^\tau \int_{\F(t)} \vr (\vu-\vU)\cdot\nabla_x\vU\cdot(\vU-\vu) \dxdt\right| \leq \int_0^\tau h_1(t) \mathcal{E}([\vr,\vu]|[r,\vU]) \dt
\end{equation}
for some nonnegative $h_1 \in L^1(0,T)$. Analogously, in the remainder of this section $h_k(t)$ will denote a nonnegative integrable function.

Next we use the strong formulation of the equations \eqref{eq:fluidtransformed} satisfied by $(r,\vU)$ in $\F(t)$, namely
\begin{equation*}
    \de_t\bU+\bU\cdot\nabla_{x}\bU = \frac{1}{r}\left(\Div \tn{S}(\nabla_x\vU) - \nabla_x p(r) + \mathbf{F}(r,\bU) - G(r,\bU)\bU\right).
\end{equation*}
By this way we have
\begin{align}\label{eq:bubu0}
    &\int_0^\tau \int_{\F(t)} \vr(\de_t\vU + \vU\cdot\nabla_x\vU)\cdot(\vU-\vu) \dxdt \\ \nonumber
    & \quad = \int_0^\tau \int_{\F(t)} \frac{\vr}{r}\left( \Div \tn{S}(\nabla_x\vU) - \nabla_x p(r) + \mathbf{F}(r,\bU) - G(r,\bU)\bU\right)\cdot(\vU-\vu) \dxdt = \sum_{i=1}^4 \mathcal{J}_i.
\end{align}
In order to handle $\mathcal{J}_1$ we write
\begin{equation}\label{eq:bubu1}
    \frac{\vr}{r} \Div \tn{S}(\nabla_x\vU) \cdot (\vU-\vu) = \frac{\vr-r}{r}\Div \tn{S}(\nabla_x\vU) \cdot (\vU-\vu) + \Div \tn{S}(\nabla_x\vU) \cdot (\vU-\vu),
\end{equation}
and combining the last term in \eqref{eq:bubu1} with $\int_0^\tau \mathcal{I}_2(t)\dt$ in \eqref{eq:R2} we get
\begin{align*}
    &\int_0^\tau\int_{\F(t)} \Div \tn{S}(\nabla_x\vU) \cdot (\vU-\vu) + \tn{S}(\nabla_x\vU) : \nabla_x (\vU-\vu) \dxdt \\ \nonumber
    & = \int_0^\tau\int_{\F(t)} \Div(\tn{S}(\nabla_x\vU)(\vU-\vu)) \dxdt = \int_0^\tau \int_{\de \B(t)} \tn{S}(\nabla_x\vU)\vn\cdot(\vU-\vu)\,\d S \dt = \mathcal{J}_5.
\end{align*}
Moreover we estimate
\begin{equation}\label{eq:DDD}
    \int_0^\tau\int_{\F(t)} \frac{\vr-r}{r}\Div \tn{S}(\nabla_x\vU) \cdot (\vU-\vu) \dxdt
\end{equation}
in the same way as was originally done in \cite{FeJiNo}, for more details see also \cite[Section 6.3]{KrNePi}, where the authors study the moving domain case relevant to \eqref{eq:DDD}. The integral \eqref{eq:DDD} is decomposed into three parts and the case $\vr$ small, $\vr$ comparable to $r$ and $\vr$ large are estimated separately, moreover the estimate in the last case is a little bit different if $\gamma < 2$ and if $\gamma \geq 2$. We use a similar procedure also later in our proof, when we estimate the terms containing $\vc{F}(r,\vU)$ and $G(\vr,\vU)$, see \eqref{eq:starterrorterms} and following estimates. We obtain
\begin{align}\label{eq:DDD2}
    &\left|\int_0^\tau\int_{\F(t)} \frac{\vr-r}{r}\Div \tn{S}(\nabla_x\vU) \cdot (\vU-\vu) \dxdt\right| \\ \nonumber
    &\quad \leq \delta \left\|\tn{S}(\nabla_x\bU-\nabla_x\vu)\right\|^2_{L^2(Q_\F)} + C(\delta)\int_0^\tau h_2(t)\mathcal{E}([\vr,\vu]|[r,\vU]) \dt.
\end{align}
%
Next we combine $\mathcal{J}_2$ with $\mathcal{I}_5$ and use \eqref{prespot} to get
\begin{align}\label{eq:bubu100}
    &\int_0^\tau\int_{\F(t)} (r\vU-\vr\vu)\cdot\nabla_x P'(r) - \frac{\vr}{r}\nabla_x p(r)\cdot(\vU-\vu)\dxdt \\ \nonumber
    & \quad =  \int_0^\tau\int_{\F(t)} (r-\vr)\vU\cdot\nabla_x P'(r) \dxdt.
\end{align}
Adding $\int_0^{\tau}\mathcal{I}_4(t)\dt$ to the resulting expression of \eqref{eq:bubu100} we get
\begin{equation}\label{eq:bubu102}
    \int_0^\tau\int_{\F(t)} (r-\vr)\left(\de_t P'(r) + \vU\cdot\nabla_x P'(r)\right) \dxdt,
\end{equation}
and we use the continuity equation of \eqref{eq:fluidtransformed} which is satisfied by the strong solution $(r,\vU)$ to get
\begin{align}\label{eq:bubu101}
    &\de_t P'(r) + \vU\cdot\nabla_x P'(r) = P''(r)\left(\de_t r + \vU\cdot\nabla_x r\right) \\ \nonumber 
    & \quad = P''(r)( G(r,\vU) - r \Div \vU) = - p'(r) \Div \vU + P''(r) G(r,\vU). 
\end{align}
Plugging in \eqref{eq:bubu101} into \eqref{eq:bubu102} we get
\begin{align}\label{eq:bubu103}
    &\mathcal{J}_2+\int_0^\tau [\mathcal{I}_4(t)+\mathcal{I}_5(t)]\dt \nonumber \\ &=\int_0^\tau\int_{\F(t)} -p'(r)(r-\vr)\Div \vU + P''(r)(r-\vr)G(r,\vU) \dxdt,
\end{align}
and we combine the first integral with $\mathcal{I}_3$ to get
\begin{equation*}
    \left|\int_0^\tau\int_{\F(t)} \Div \vU\left(p(\vr) - p(r) - p'(r)(\vr-r)\right) \dxdt\right| \leq \int_0^\tau h_3(t)\mathcal{E}([\vr,\vu]|[r,\vU]) \dt.
\end{equation*}
Combining the above estimates we arrive at 
\begin{align} \label{eq:e1}
\int_0^{\tau} \Big[\mathcal{I}_1^F(t)+\sum_{i=2}^5 \mathcal{I}_i(t)\Big]\dt \leq & \delta \left\|\tn{S}(\nabla_x\bU-\nabla_x\vu)\right\|^2_{L^2(Q_\F)} + C(\delta)\int_0^\tau h_4(t)\mathcal{E}([\vr,\vu]|[r,\vU]) \dt \nonumber\\
& +\int_0^\tau \int_{\F(t)} P''(r)(r-\vr)G(r,\vU)\dx\dt.
\end{align}

Now we would like to combine $\mathcal{I}_1^B$ with the boundary integrals $\mathcal{I}_6$ and $\mathcal{J}_5$. To this end we use the equations satisfied by the transformed rigid velocity \eqref{eq:rigidtransformed1}-\eqref{eq:rigidtransformed2}. We first use \eqref{eq:trick} again and estimate the term quadratic in $(\vu-\vU)$ from $\mathcal{I}_1^B$ in a similar manner as \eqref{eq:trick2}. In order to express the term
\begin{equation}
    \int_0^\tau \int_{\B(t)} \vr(\de_t\vU + \vU\cdot\nabla_x\vU)\cdot(\vU-\vu) \dxdt
\end{equation}
we follow the procedure described in the Appendix, especially see \eqref{eq:Ap1}-\eqref{eq:Ap7}. Note however that we have to use equations \eqref{eq:rigidtransformed1}-\eqref{eq:rigidtransformed2} containing additional terms. Therefore we end up with
\begin{align} \label{eq:e2}
    &\int_0^\tau \mathcal{I}_1^B(t)\dt = \int_0^\tau \int_{\B(t)} \vr(\de_t\vU + \vU\cdot\nabla_x\vU)\cdot(\vU-\vu) \dxdt \\ \nonumber
    &\quad = -\int_0^\tau \int_{\de\B(t)} (\tn{S}(\nabla_x\vU)-p(r)\tn{I})\vn\cdot(\vU-\vu)\,\d S \dt \\ \nonumber
    &\quad -\int_0^\tau m ((\vw^s-\vw_1)\times\vV^s)\cdot (\vV^s-\vV_1) - \tn{J}_1((\vw^s-\vw_1)\times\vw^s)\cdot(\vw^s-\vw_1) \dt.
\end{align}
Clearly we have 
\begin{align*}
    &\left|\int_0^\tau m ((\vw^s-\vw_1)\times\vV^s)\cdot (\vV^s-\vV_1) - \tn{J}_1((\vw^s-\vw_1)\times\vw^s)\cdot(\vw^s-\vw_1) \dt\right| \\ \nonumber
    &\quad \leq \int_0^\tau h_4(t)\mathcal{E}([\vr,\vu]|[r,\vU]) \dt,
\end{align*}
therefore \eqref{eq:e2} implies  
\begin{equation} \label{eq:e2b}
\left| \int_0^\tau [\mathcal{I}_1^B(t)+\mathcal{I}_6(t)]\dt+\mathcal{J}_5  \right| \leq \int_0^\tau h_4(t)\mathcal{E}([\vr,\vu]|[r,\vU]) \dt.   
\end{equation}

Finally, it remains to estimate the integrals over the fluid part containing the terms $\vc{F}(r,\vU)$ and $G(r,\vU)$ coming from \eqref{eq:bubu0} and \eqref{eq:e1}, more precisely
\begin{equation}\label{eq:starterrorterms}
    \int_0^\tau \int_{\F(t)} \frac{\vr}{r} \vc{F}(r,\vU)\cdot(\vU-\vu) - \frac{\vr}{r}G(r,\vU)\vU\cdot(\vU-\vu) + P''(r)(r-\vr)G(r,\vU) \dxdt.
\end{equation}
We split all integrals into three domains and we study first the set $\vr$ close to $r$, then $\vr$ small and finally $\vr$ large. To this end we set $r_- = \inf_{Q_\F} r(t,\vc{x}) > 0$ and $r_+ = \sup_{Q_\F} r(t,\vc{x}) < \infty$ and we recall that
\begin{align}\label{eq:pr26}
P(\vr) - P'(r)(\vr-r) - P(r) &\geq c(r)(\vr-r)^2 \quad \text{ for } \frac{r_-}{2} < \vr < 2r_+, \\ \nonumber
&\geq c(r)(1+\vr^\gamma) \quad \text{ otherwise}.
\end{align}
In order to simplify notation we set 
\begin{align*}
    S_1(t) &= \left\{\vc{x} \in Q_\F: \quad \frac{r_-}{2} < \vr(t,\vc{x}) < 2r_+ \right\},\\
    S_2(t) &= \left\{\vc{x} \in Q_\F: \quad \vr(t,\vc{x}) \leq \frac{r_-}{2} \right\},\\
    S_3(t) &= \left\{\vc{x} \in Q_\F: \quad \vr(t,\vc{x}) \geq 2r_+ \right\}.
\end{align*}

We start with the last term of \eqref{eq:starterrorterms}. Using the expression \eqref{eq:Gdef} for $G(r,\vU)$, Lemma \ref{l:31} and \eqref{eq:pr26} we have
\begin{align} \label{eq:estimaterhomed}
    &\left|\int_{S_1(t)} P''(r)(r-\vr)G(r,\vU) \dx \right| \leq C\|r-\vr\|_{L^2(S_1(t))} \|\de_t \ZZ\|_{L^\infty(\F(t))} \\ \nonumber
    &\quad \leq C(\|r-\vr\|_{L^2(S_1(t))}^2 + |\vV^s-\vV_1|^2 + |\vw^s-\vw_1|^2) \leq C\mathcal{E}([\vr,\vu]|[r,\vU])(t).
\end{align}
In the case of small $\vr$ we proceed as follows.
\begin{align} \label{eq:estimaterhosmall}
    &\left|\int_{S_2(t)} P''(r)(r-\vr)G(r,\vU) \dx \right| \leq C\|1\|_{L^2(S_2(t))} \|\de_t \ZZ\|_{L^\infty(\F(t))} \\ \nonumber
    &\quad \leq C(\|1\|_{L^2(S_2(t))}^2 + |\vV^s-\vV_1|^2 + |\vw^s-\vw_1|^2) \leq C\mathcal{E}([\vr,\vu]|[r,\vU])(t).
\end{align}
Finally for large $\vr$ we split the estimate into two cases. First for $\gamma \leq 2$ we have
\begin{align} \label{eq:estimaterholarge1}
    &\left|\int_{S_3(t)} P''(r)(r-\vr)G(r,\vU) \dx \right| \leq C\|\vr\|_{L^\gamma(S_3(t))} \|\de_t \ZZ\|_{L^\infty(\F(t))} \\ \nonumber
    &\quad \leq C(\|\vr\|_{L^\gamma(S_3(t))}^2 + |\vV^s-\vV_1|^2 + |\vw^s-\vw_1|^2) \leq C\left(\mathcal{E}([\vr,\vu]|[r,\vU])^{2/\gamma}(t) + \mathcal{E}([\vr,\vu]|[r,\vU])(t) \right) \\ \nonumber
    &\quad \leq C\mathcal{E}([\vr,\vu]|[r,\vU])(t),
\end{align}
where we have used $\mathcal{E}([\vr,\vu]|[r,\vU]) \in L^\infty(0,T)$ and the fact that $\frac{2}{\gamma} \geq 1$. In the case $\gamma > 2$ we use $\vr \leq C\vr^{\gamma/2}$ to get
\begin{align} \label{eq:estimaterholarge2}
    &\left|\int_{S_3(t)} P''(r)(r-\vr)G(r,\vU) \dx \right| \leq C\|\vr^{\gamma/2}\|_{L^2(S_3(t))} \|\de_t \ZZ\|_{L^\infty(\F(t))} \\ \nonumber
    &\quad \leq C(\|\vr\|_{L^\gamma(S_3(t))}^\gamma + |\vV^s-\vV_1|^2 + |\vw^s-\vw_1|^2) \leq  C\mathcal{E}([\vr,\vu]|[r,\vU])(t).
\end{align}
Combining \eqref{eq:estimaterhomed}-\eqref{eq:estimaterholarge2} we get 
\begin{equation} \label{eq:e3}
\left| \int_{\F(t)} P''(r)(r-\vr)G(r,\vU) \dx \right| \leq C\mathcal{E}([\vr,\vu]|[r,\vU])(t).    
\end{equation}
Next we estimate the second term of \eqref{eq:starterrorterms}. 
For this purpose we split it into two parts as in \eqref{eq:bubu1} using
\begin{equation*}
    \frac{\vr}{r} = 1 + \frac{\vr-r}{r}.
\end{equation*}
Then we have
\begin{align*}
    &\left|\int_{S_1(t)} G(r,\vU)\vU\cdot(\vU-\vu) \dx \right| \leq C\|\vU-\vu\|_{L^6(S_1(t))} \|\de_t \ZZ\|_{L^\infty(\F(t))} \\ \nonumber
    &\quad \leq \delta\|\tn{S}(\nabla_x\vU - \nabla_x\vu)\|_{L^2(S_1(t))}^2 + C(\delta)(|\vV^s-\vV_1|^2 + |\vw^s-\vw_1|^2) \\ \nonumber
    &\quad \leq \delta\|\tn{S}(\nabla_x\vU - \nabla_x\vu)\|_{L^2(\F(t))}^2 + C\mathcal{E}([\vr,\vu]|[r,\vU])(t).
\end{align*}
The same term with density is treated as
\begin{align*}
    &\left|\int_{S_1(t)} \frac{\vr-r}{r}G(r,\vU)\vU\cdot(\vU-\vu) \dx \right| \leq C\|\vU-\vu\|_{L^6(S_1(t))}\|\vr-r\|_{L^2(S_1(t))}\|\de_t \ZZ\|_{L^\infty(\F(t))} \\ \nonumber
    &\quad \leq \delta\|\tn{S}(\nabla_x\vU - \nabla_x\vu)\|_{L^2(S_1(t))}^2 + C(\delta)\|\vr-r\|_{L^2(S_1(t))}^2(|\vV^s-\vV_1|^2 + |\vw^s-\vw_1|^2) \\ \nonumber
    &\quad \leq \delta\|\tn{S}(\nabla_x\vU - \nabla_x\vu)\|_{L^2(\F(t))}^2 + C\mathcal{E}([\vr,\vu]|[r,\vU])(t),
\end{align*}
where we again used that $\mathcal{E}([\vr,\vu]|[r,\vU]) \in L^\infty(0,T)$ and so  $\mathcal{E}([\vr,\vu]|[r,\vU])^2(t) \leq C \mathcal{E}([\vr,\vu]|[r,\vU])(t)$.

The case $\vr$ small is estimated similarly as in \eqref{eq:estimaterhosmall} and the case $\vr$ large is again estimated separately for $\gamma \leq 2$ and for $\gamma > 2$ as in \eqref{eq:estimaterholarge1} and \eqref{eq:estimaterholarge2}, respectively. Note that since $\gamma > \frac 32$, we do not run into trouble with the H\"older inequality. This way we get 
\begin{equation} \label{eq:e4}
\int_{\F(t)}\frac{\vr}{r}G(r,\vU)\vU\cdot(\vU-\vu)\dx \leq \delta\|\tn{S}(\nabla_x\vU - \nabla_x\vu)\|_{L^2(\F(t))}^2 + C\mathcal{E}([\vr,\vu]|[r,\vU])(t).     
\end{equation}

Finally, we estimate the first term of \eqref{eq:starterrorterms}. Here we proceed the same way as in the second term provided we show the estimate of $\|\vF(r,\vU)\|_{L^\infty(\F(t))}$. To this end we use formula \eqref{eq:Fdef} and use here the notation 
\begin{equation*}
    \vF(r,\vU) = \sum_{k = 1}^{11} \vF^k(r,\vU),
\end{equation*}
where $\vF^k(r,\vU)$ are the 11 terms of the right hand side of \eqref{eq:Fdef}.
In order to estimate these terms we apply Lemma \ref{l:31}.
In the first three terms we use \eqref{eq:318} to end up with
\begin{equation*}
    \sum_{k=1}^3 \|\vF^k\|_{L^\infty(\F(t))} \leq C\|\de_t \ZZ\|_{W^{1,\infty}(\F(t))} \leq C(|\vV^s-\vV_1| + |\vw^s-\vw_1|)(t).
\end{equation*}
Next, using \eqref{eq:Gammadef} and \eqref{eq:317} it is easy to conclude that
\begin{align*}
    & \|\vF^4 + \vF^9 + \vF^{10}\|_{L^\infty(\F(t))} \leq C\|\Gamma^a_{bc} \|_{L^{\infty}(\F(t))} \leq
    C\|\nabla^2_x \ZZ \|_{L^{\infty}(\F(t))}  \\ \nonumber
    &\qquad \leq C(\|\vV^s-\vV_1\|_{L^2(0,t)} + \|\vw^s-\vw_1\|_{L^2(0,t)}).
\end{align*}
Similarly we have
\begin{equation*}
    \|\vF^{11}\|_{L^\infty(\F(t))} \leq     C\|\nabla^3_x \ZZ \|_{L^{\infty}(\F(t))}  \leq C(\|\vV^s-\vV_1\|_{L^2(0,t)} + \|\vw^s-\vw_1\|_{L^2(0,t)}).
\end{equation*}

In order to estimate other terms we claim that the analog of \eqref{eq:317} holds also for $\ZZj$, namely that
\begin{equation} \label{eq:317new}
     \|\ZZj(t,\widetilde{\bZ}_2(t,\cdot)) - \mathrm{id}\|_{W^{2,\infty}(\F(t))} \leq C\left(\|\vV_1-\vV^s\|_{L^2(0,t)} + \|\vw_1-\vw^s\|_{L^2(0,t)}\right).
\end{equation}
Indeed, this is not difficult to observe by similar arguments to those used in the proof of Lemma \ref{l:31}. Using \eqref{eq:317new} we easily get
\begin{equation*}
    \|\Delta_x \ZZj \|_{L^\infty(\F(t))} + \|\tn{G}-\tn{I}\|_{L^\infty(\F(t))} \leq C\left(\|\vV_1-\vV^s\|_{L^2(0,t)} + \|\vw_1-\vw^s\|_{L^2(0,t)}\right), 
\end{equation*}
and consequently
\begin{equation*}
    \|\vF^5 + \vF^6 + \vF^7 + \vF^8 \|_{L^\infty(\F(t))} \leq C\left(\|\vV_1-\vV^s\|_{L^2(0,t)} + \|\vw_1-\vw^s\|_{L^2(0,t)}\right) .
\end{equation*}
Combining the estimates for $\vF^k$ with the arguments used in the proof of \eqref{eq:e4} we obtain 
\begin{equation} \label{eq:e5}
\left|\int_{\F(t)}\frac{\vr}{r}F(r,\vU)\cdot(\vU-\vu)\dx\right| \leq \delta\|\tn{S}(\nabla_x\vU - \nabla_x\vu)\|_{L^2(\F(t))}^2 + C\mathcal{E}([\vr,\vu]|[r,\vU])(t).     
\end{equation}
Putting together the estimates \eqref{eq:e1}, \eqref{eq:e2b}, \eqref{eq:e3}, \eqref{eq:e4} and \eqref{eq:e5} we finally end up with 
\begin{equation*} 
\left| \int_0^\tau \mathcal{R}(\vr,\vu,r,\vU)(t) \right| \leq \delta \left\|\tn{S}(\nabla_x\bU-\nabla_x\vu)\right\|^2_{L^2(Q_\F)} + C(\delta)\int_0^\tau h_5(t)\mathcal{E}([\vr,\vu]|[r,\vU])(t) \dt.    
\end{equation*}
Combining this estimate with \eqref{eq:e0} we obtain for sufficiently small $\delta>0$
\begin{equation*}
    \mathcal{E}([\vr,\vu]|[r,\vU])(\tau) \leq \int_0^\tau h(t)\mathcal{E}([\vr,\vu]|[r,\vU])(t) \dt
\end{equation*}
with some nonnegative function $h \in L^1(0,T)$ and use the Gronwall lemma to conclude that 
\begin{equation}
\mathcal{E}([\vr,\vu]|[r,\vU])(\tau) = 0    
\end{equation}
for almost all $\tau \in (0,T)$. This in particular implies $\vV_1 = \vV^s$, $\vw_1 = \vw^s$, $\vr = r$ and $\vu = \vU$.

In order to finish the proof we need to show that in fact $\vV_1 = \vV_2$, $\vw_1 = \vw_2$, $\B_1(t) = \B_2(t)$ and that $(\vr_{1\F},\vu_{1\F}) = (\vr_{2\F},\vu_{2\F})$. Using \eqref{eq:strongbody} we have $\vV_1 =\tn{O}_1\tn{O}_2^T\vV_2$ and $\vw_1 = \tn{O}_1\tn{O}_2^T\vw_2$, i.e.
\begin{align} \label{eq:121}
    \tn{O}_1^T\vV_1 &= \tn{O}_2^T\vV_2, \\
    \tn{O}_1^T\vw_1 &= \tn{O}_2^T\vw_2. \label{eq:122}
\end{align}
Moreover, we have using \eqref{eq:QO} that
\begin{equation}
    \vw_i \times \x = \tn{Q}_i\x = \frac{\d \tn{O}_i}{\dt}\tn{O}_i^T\x
\end{equation}
for $i=1,2$ and for all $\x \in \R^3$. Therefore
\begin{equation}
    \tn{O}_i^T\vw_i \times \x = \tn{O}_i^T(\vw_i \times \tn{O}_i\x) = \tn{O}_i^T\frac{\d \tn{O}_i}{\dt}\tn{O}_i^T\tn{O}_i\x = \tn{O}_i^T\frac{\d \tn{O}_i}{\dt}\x
\end{equation}
for $i=1,2$ and all $\x \in \R^3$ and thus from \eqref{eq:122} we conclude that 
\begin{equation} \label{eq:458}
    \tn{O}_1^T\frac{\d \tn{O}_1}{\dt} = \tn{O}_2^T\frac{\d \tn{O}_2}{\dt}.
\end{equation}
Simple manipulation of \eqref{eq:458} yields
\begin{equation*}
    \frac{\d(\tn{O}_1 - \tn{O}_2)}{\dt} = (\tn{O}_1-\tn{O}_2)\tn{O}_2^T\frac{\d \tn{O}_2}{\dt}.
\end{equation*}
Denoting $\tn{O}_\Delta = \tn{O}_1 - \tn{O}_2$ and treating $\tn{O}_2$ as a given function of time we end up with the ordinary differential equation
\begin{align} \label{eq:ODE1}
    \frac{\d \tn{O}_\Delta(t)}{\dt} &= \tn{O}_\Delta(t) \tn{W}(t), \\
    \tn{O}_\Delta(0) &= 0, \label{eq:ODE2}
\end{align}
for some given matrix valued function $\tn{W}(t)$. The problem \eqref{eq:ODE1}-\eqref{eq:ODE2} has a unique solution $\tn{O}_\Delta(t) = 0$, i.e. $\tn{O}_1 = \tn{O}_2$ and from \eqref{eq:121} and \eqref{eq:122} we conclude $\vV_1 = \vV_2$ and $\vw_1 = \vw_2$. In particular we know that the position of the bodies of the weak and strong solutions are the same, i.e. $\B_1(t) = \B_2(t)$.

Finally it is easy to set the cutoff functions $\zeta_i(t,\x)$ ($i=1,2$) introduced in the beginning of Section \ref{s:cc} to coincide in the case that $\B_1 = \B_2$. Therefore $\Lambda_1(t,\x) = \Lambda_2(t,\x)$, $\bZ_1 = \bZ_2$ and $\ZZ(t,\x) = \ZZj(t,\x) = \x$ for all $\x \in \Omega$ and $t \in (0,T)$. Finally we use \eqref{newsol1} and \eqref{newsol2} to conclude that $\vr_{1\F} = \vr_{2\F}$ and $\vu_{1\F} = \vu_{2\F}$. 

Now we return to the notation $T_{min}$ introduced in \eqref{eq:tmin}. We have shown that the weak and strong solutions coincide on the time interval $(0,T_{min})$. However, if $T_{min} < T$, we have on one hand
\begin{equation*}
    d(\B_2(T_{min}),\de\Omega) \geq \kappa
\end{equation*}
due to \eqref{eq:dist_cond} and on the other hand
\begin{equation*}
    d(\B_1(T_{min}),\de\Omega) \leq \frac{\kappa}{2},
\end{equation*}
which is a contradiction. Therefore it has to hold $T_{min} = T$. Theorem \ref{t3} is proved.

\section{Appendix}

Here we present how to obtain the weak formulation of the momentum equation \eqref{moment} from the strong formulation \eqref{fluidmotion} and \eqref{bodymotion}.

First we recall the transport theorem used to express time derivatives of integrals over time dependent domains. We have
\begin{align} \label{trans}
    \frac{\d}{\d t} \int_{\F(t)} f(x,t) \dx &= \int_{\F(t)} \de_t f(x,t) + \Div (f \vu)(x,t) \dx \\
    & = \int_{\F(t)} \de_t f(x,t) \dx + \int_{\de \B(t)} f \vu\cdot \vc{n}(x,t)\, \d S. \nonumber
\end{align}
Here $\vu$ is the velocity describing the movement of the domain, which in our case is the solution itself, so $\vu = \vu_\F$ and moreover we know that $\vu_\F\cdot \vc{n} = \vu_\B\cdot \vc{n}$ on the boundary of the body $\B(t)$.

Let us take a test function  $\vphi \in {\mathcal R}(Q_{\B})$ and recall that
$${\mathcal R}(Q_{\B})\equiv \{\vphi\in C^\infty_c([0,T)\times \Omega)\ |\ \tn{D}(\vphi)=0\ \ \mbox{on an open neighbourhood of}\ \overline{Q_{\B}}\}.$$ 
Starting from the strong formulation of the momentum equation for the fluid we get 
\begin{equation}
\int_0^T \int_{\F(t)} \left(\de_t(\vr_\F\vu_\F) + \Div(\vr_\F\vu_\F\otimes\vu_\F) - \Div \tn{S}(\nabla_x \vu_\F) + \nabla_x p(\vr_\F) \right)\cdot\vphi \dxdt = 0.    
\end{equation}
By \eqref{trans}, the time derivative term yields
\begin{align}\label{eq:TD1}
    &\int_0^T \int_{\F(t)}\de_t(\vr_\F\vu_\F)\cdot \vphi \dxdt = \int_0^T \int_{\F(t)}\de_t(\vr_\F\vu_\F\cdot \vphi) - \vr_\F\vu_\F\cdot\de_t\vphi \dxdt \\ \nonumber
    &\quad =  - \int_{\F(0)}(\vr_\F\vu_\F\cdot\vphi)(0,x)\dx \\ \nonumber
    &\quad - \int_0^T \int_{\F(t)} \vr_\F\vu_\F\cdot\de_t\vphi\dxdt - \int_0^T \int_{\F(t)} \Div((\vr_\F\vu_\F\cdot\vphi)\vu_\F)\dxdt.
\end{align}
Moreover the convective term yields
\begin{align}\label{eq:CT1}
    &\int_0^T \int_{\F(t)}\Div(\vr_\F\vu_\F\otimes\vu_\F)\cdot \vphi \dxdt \\ \nonumber
    &\quad = \int_0^T \int_{\F(t)} \Div((\vr_\F\vu_\F\cdot\vphi)\vu_\F)\dxdt - \int_0^T \int_{\F(t)} (\vr_\F\vu_\F\otimes\vu_\F):\nabla_x\vphi\dxdt.
\end{align}
Finally, the stress tensor and the pressure terms yield simply
\begin{align}
    &- \int_0^T \int_{\F(t)}\Div(\tn{S}(\nabla_x\vu_\F) - p(\vr_\F)\tn{I})\cdot \vphi \dxdt \\ \nonumber
    &\quad = \int_0^T \int_{\F(t)} (\tn{S}(\nabla_x\vu_\F) - p(\vr_\F)\tn{I}):\nabla_x\vphi\dxdt - \int_0^T \int_{\de \B(t)} (\tn{S}(\nabla_x\vu_\F) - p(\vr_\F)\tn{I})\vc{n}\cdot\vphi\,\d S \dt.
\end{align}
Plugging all the above computations together we end up with the weak formulation of the momentum equation on the fluid domain as 
\begin{align}\label{eq:MEfluidpart}
    & - \int_{\F(0)}(\vr_\F\vu_\F\cdot\vphi)(0,x)\dx - \int_0^T \int_{\de \B(t)} (\tn{S}(\nabla_x\vu_\F) - p(\vr_\F)\tn{I})\vc{n}\cdot\vphi\,\d S \dt\\ \nonumber
    &\quad = \int_0^T \int_{\F(t)} \vr_\F\vu_\F\cdot\de_t\vphi + (\vr_\F\vu_\F\otimes\vu_\F - \tn{S}(\nabla_x\vu_\F)  + p(\vr_\F)\tn{I}):\nabla_x\vphi \dxdt.
\end{align}

Now let us turn our attention to the rigid body. Since $\tn{D}(\vphi) = 0$ on $\B(t)$, there exist functions $\vc{A}(t)$ and $\vc{B}(t)$ such that
\begin{align*}
\vu_\B(t,x) &= \vV(t) + \vw(t) \times (\x - \X(t)) \qquad \text{ in } Q_\B \\
\vphi(t,x) &= \vc{A}(t) + \vc{B}(t) \times (\x - \X(t)) \qquad \text{ in } Q_\B.    
\end{align*}
It is a matter of a simple calculation to deduce that
\begin{equation} \label{eq:Ap0}
\frac{\de \vu_\B}{\de t} + \vu_\B\cdot\nabla_x\vu_\B = \frac{\d \vV}{\d t} + \frac{\d \vw}{\d t}\times (\x-\X) + \vw \times (\vw \times (\x-\X)) \quad \text{ in } Q_\B.
\end{equation}
We use \eqref{bodymotion}$_1$ to deduce 
\begin{equation}\label{eq:Ap1}
    \int_{\B(t)} \vr_\B \frac{\d \vV}{\d t}\cdot \vc{A} \dx = - \int_{\de \B(t)} (\tn{S}(\nabla_x\vu_\F) - p(\vr_\F)\tn{I})\vc{n}\cdot \vc{A}\,\d S. 
\end{equation}    
Next, by \eqref{def:X} we get
\begin{align}    
    &\int_{\B(t)} \vr_\B \frac{\d \vV}{\d t}\cdot (\vc{B}\times (\x-\X)) \dx = \left(\frac{\d \vV}{\d t} \times \vc{B}\right)\cdot \int_{\B(t)} \vr_\B(\x-\X) \dx = 0,\\
    &\int_{\B(t)} \vr_\B \left(\frac{\d \vw}{\d t} \times (\x-\X)\right)\cdot \vc{A} \dx = \vc{A}\cdot \left(\frac{\d \vw}{\d t} \times \int_{\B(t)} \vr_\B(\x-\X) \dx\right) = 0, \\
    &\int_{\B(t)} \vr_\B (\vw\times(\vw\times(\x-\X))\cdot \vc{A} \dx = \vc{A}\cdot \left(\vw\times \left(\vw\times \int_{\B(t)} \vr_\B(\x-\X) \dx\right)\right) = 0.
\end{align}
Finally, using \eqref{def:J} and \eqref{bodymotion}$_2$ we obtain
\begin{align}
    &\int_{\B(t)} \vr_\B \left(\frac{\d \vw}{\d t} \times (\x-\X)\right)\cdot(\vc{B}\times (\x-\X)) \dx  = \tn{J}\frac{\d \vw}{\d t}\cdot \vc{B} \\ \nonumber
    & \quad = (\tn{J}\vw \times \vw)\cdot \vc{B} - \vc{B}\cdot \int_{\de \B(t)} (\x-\X)\times (\tn{S}(\nabla_x\vu_\F) - p(\vr_\F)\tn{I})\vc{n}\,\d S \\ \nonumber
    & \quad = \tn{J}\vw \cdot (\vw\times\vc{B}) - \int_{\de \B(t)} (\tn{S}(\nabla_x\vu_\F) - p(\vr_\F)\tn{I})\vc{n} \cdot (\vc{B}\times (\x-\X))\,\d S,
\end{align}
and by a more lengthy computation one can also conclude
\begin{equation}\label{eq:Ap6}
    \int_{\B(t)} \vr_\B\left(\vw\times \left(\vw \times (\x-\X)\right)\right)\cdot(\vc{B}\times (\x-\X)) \dx = - \tn{J}\vw\cdot (\vw\times\vc{B}).
\end{equation}
Summing \eqref{eq:Ap1}-\eqref{eq:Ap6} and using \eqref{eq:Ap0} we end up with 
\begin{equation}\label{eq:Ap7}
    \int_{\B(t)} \vr_\B(\de_t \vu_\B + \vu_\B\cdot\nabla_x\vu_\B) \cdot \vphi \dx = - \int_{\de \B(t)}  (\tn{S}(\nabla_x\vu_\F) - p(\vr_\F)\tn{I})\vc{n}\cdot\vphi \,\d S,
\end{equation}
which together with the conservation of mass of the body \eqref{eq:CE_body} yields
\begin{equation}\label{eq:bla0}
    \int_{\B(t)} (\de_t(\vr_\B \vu_\B) + \Div(\vr_\B\vu_\B\otimes\vu_\B)) \cdot \vphi \dx = - \int_{\de \B(t)}  (\tn{S}(\nabla_x\vu_\F) - p(\vr_\F)\tn{I})\vc{n}\cdot\vphi \,\d S.
\end{equation}
Now, similarly as in \eqref{eq:TD1}, \eqref{eq:CT1} we have
\begin{align}\label{eq:bla1}
    &\int_0^T \int_{\B(t)} (\de_t(\vr_\B \vu_\B) + \Div(\vr_\B\vu_\B\otimes\vu_\B)) \cdot \vphi \dxdt \\ \nonumber
    & \quad =  - \int_{\B(0)}(\vr_\B\vu_\B\cdot\vphi)(0,x)\dx - \int_0^T \int_{\B(t)} \vr_\B\vu_\B\cdot\de_t\vphi \dxdt + \int_0^T \int_{\B(t)} (\vr_\B\vu_\B\otimes\vu_\B):\nabla_x\vphi\dxdt,
\end{align}
where the last term on the right hand side is zero, because $\tn{D}(\vphi) = 0$ on $\B(t)$. This is also the reason why
\begin{equation}\label{eq:bla2}
    \int_0^T \int_{\B(t)} \tn{S}(\nabla_x\vu_\B):\nabla_x\vphi - p(\vr_B) \Div \vphi \dxdt = 0.
\end{equation}
Combining \eqref{eq:bla0} - \eqref{eq:bla2} we end up with 
\begin{align}\label{eq:MEsolidpart}
    & - \int_{\B(0)}(\vr_\B\vu_\B\cdot\vphi)(0,x)\dx + \int_0^T \int_{\de \B(t)} (\tn{S}(\nabla_x\vu_\F) - p(\vr_\F)\tn{I})\vc{n}\cdot\vphi\,\d S \dt\\ \nonumber
    &\quad = \int_0^T \int_{\B(t)} \vr_\B\vu_\B\cdot\de_t\vphi + (\vr_\B\vu_\B\otimes\vu_\B - \tn{S}(\nabla_x\vu_\B)  + p(\vr_\B)\tn{I}):\nabla_x\vphi \dxdt,
\end{align}
which together with \eqref{eq:MEfluidpart} yields \eqref{moment}.

\section*{Acknowledgements}
 The work of O. Kreml and \v S. Ne\v casov\'a was supported by the Czech Science Foundation grant GA19-04243S in the framework of RVO 67985840. The work of T. Piasecki was supported by Polish National Science Centre grant 2018/29/B/ST1/00339 and by the Czech Science Foundation grant GA19-04243S in the framework of RVO 67985840.


\end{document}